\newtheorem{Thm}{Theorem}[section]
\newtheorem{Prop}[Thm]{Proposition}
\newtheorem{Lem}[Thm]{Lemma}
\newtheorem{Def}[Thm]{Definition}
\newtheorem{example}[Thm]{Example}
\newtheorem{proposition-definition}[Thm]{Proposition-Definition}
\newtheorem{conj}[Thm]{Conjecture}
\theoremstyle{remark}
\newtheorem{Rem}[Thm]{Remark}
\newtheorem{Question}[Thm]{Question}
\newtheorem{Convention}[Thm]{Convention}
\numberwithin{equation}{section}
\DeclareSymbolFont{cyrletters}{OT2}{wncyr}{m}{n}
\DeclareMathSymbol{\Sha}{\mathalpha}{cyrletters}{"58}
\newcommand{\et}{\mathrm{\acute{e}t}}
\newcommand{\Bcyr}{\text{\fontencoding{OT2}\selectfont B}}
\begin{document}

\title[On the second partial Global Euler--Poincar\'e characteristics]{On the second partial Global Euler--Poincar\'e characteristics for Galois cohomology}


	
	\author{Yufan Luo}
	
	\subjclass[2020]{}
	
	\dedicatory{}
	\subjclass[2020]{primary 11R34, secondary 11R32}
	\keywords{Galois cohomology, \'etale cohomology, restricted ramification, Galois representations, presentation of Galois groups}
	
	\address{Shanghai Institute for Mathematics and Interdisciplinary Sciences (SIMIS), Shanghai 200433, China
	}
	
	\address{Research Institute of Intelligent Complex Systems, Fudan University, Shanghai
		200433, China}
	\email{yufanluo@hotmail.com}

	\maketitle
	\begin{abstract}
		Let $K$ be a number field, let $S$ be a finite set of primes of $K$ containing all archimedean primes, and let $G_{K,S}$ denote the Galois group of the maximal extension of $K$ unramified outside $S$. In this paper, we study the second partial Euler--Poincar\'e characteristic $\chi_{2}(G_{K,S},M)$ for a finite $G_{K,S}$-module $M$, without imposing the condition that the order of $M$ is an $S$-unit. By adjoining a further finite set of primes of $K$, which can be chosen to be disjoint from any prescribed set of primes of density zero, we obtain an explicit formula for the corresponding second partial Euler--Poincar\'e characteristic. As an application, we investigate the presentation of the Galois group $G_{K,S}$. Furthermore, for any number field, we construct counterexamples to the dimension conjecture for Galois deformation rings.
	\end{abstract} 
	
\section{Introduction}
\subsection{Global Euler--Poincar\'e characteristics}
Let $K$ be a number field, $S$ a finite set of primes of $K$ containing all archimedean primes, and $G_{K,S}$ the Galois group of the maximal extension of $K$ unramified outside $S$. For $v \in S$, let $K_{v}$ be the completion of $K$ at $v$. Let $\overline{K}$ (resp.\ $\overline{K_{v}}$) be an algebraic closure of $K$ (resp.\ $K_{v}$), and let $G_{K} := \operatorname{Gal}(\overline{K}/K)$ (resp.\ $G_{v} := \operatorname{Gal}(\overline{K_{v}}/K_{v})$) be the absolute Galois group of $K$ (resp.\ $K_{v}$).

We fix a homomorphism $\overline{K} \hookrightarrow \overline{K_{v}}$ extending $K \hookrightarrow K_{v}$, yielding a homomorphism $G_{v} \to G_{K} \to G_{K,S}$. Let $M$ be a finite discrete $G_{K,S}$-module. Recall that the second partial Euler--Poincar\'e characteristic $\chi_{2}(G_{K,S},M)$ for the $G_{K,S}$-module $M$ is defined by
\[ \chi_{2}(G_{K,S},M) := \frac{[H^{0}(G_{K,S},M)] \, [H^{2}(G_{K,S},M)]}{[H^{1}(G_{K,S},M)]}, \]
where $[-]$ denotes cardinality. Assume that the cardinality $[M]$ of $M$ is an $S$-unit, i.e., $S$ contains all the primes dividing the order of $M$. Then Tate's global Euler--Poincar\'e characteristic formula (see \cite[Chapter I, Theorem 5.1]{MR2261462}) states that 
\begin{equation}\label{usualeuler}
	\chi_{2}(G_{K,S},M) = \frac{1}{[M]^{[K:\mathbb{Q}]}} \prod_{v \in S_{\infty}} [H^{0}(G_{v},M)], 
\end{equation}
where the product is over all archimedean primes of $K$. This naturally leads to the following question:

\begin{Question}\label{questionA}
	If $[M]$ is not an $S$-unit, how can one calculate $\chi_{2}(G_{K,S},M)$?
\end{Question}
 
Our first observation is that if $M = \bigoplus_{p} M(p)$ is the canonical decomposition of $M$ into $p$-primary components, then we have $H^{i}(G_{K,S},M) = \bigoplus_{p} H^{i}(G_{K,S},M(p))$ for any $i \geq 0$, and hence $\chi_{2}(G_{K,S},M) = \prod_{p} \chi_{2}(G_{K,S},M(p))$. Without loss of generality, we may assume that $M$ is $p$-primary. 

The main theorem of this paper, stated below, provides a partial answer to Question \ref{questionA}.

	   \begin{Thm}\label{main}
	   Let $K$ be a number field, $p$ a prime number and $S$ a finite set of primes of $K$ containing all archimedean primes. Let $M$ be a finite discrete $p$-primary $G_{K,S}$-module. Then the following assertions hold.
	   \begin{enumerate}
	   	\item We have
	   	\begin{equation}\label{euler}
	   		\chi_{2}(G_{K,S},M)\leq \dfrac{1}{[M]^{[K:\mathbb{Q}]}}\prod_{v\in S_{\infty}}[H^{0}(G_{v},M)] \cdot
	   		\prod_{v\notin S, v\in S_{p}} \dfrac{1}{|[M]|_{v}}\cdot \epsilon,
	   	\end{equation}
	    where $S_p$ is the set of primes of $K$ above $p$, $S_{\infty}$ is the set of all archimedean primes of $K$, $|-|_{v}$ is the normalized absolute value associated to the prime $v$, and
	  	\[ 
	  \epsilon := \begin{cases} 
	  	1, & \text{if } S_{\mathrm{fin}} \neq \emptyset, \\ 
	  	[(M')^{G_{K}}], & \text{if } S_{\mathrm{fin}} = \emptyset, \text{ with } p\neq 2 \text{ or } K \text{ totally imaginary}, \\ 
	  	\left[ \bigcap_{v\in S_{\mathbb{R}}} \left( (1+c_{v})M' \cap (M')^{G_{K}} \right) \right], & \text{if } S_{\mathrm{fin}} = \emptyset, \text{ with } p=2 \text{ and } K \text{ not totally imaginary}. 
	  \end{cases}
	  \]
	  	Here, $S_{\mathrm{fin}} \subset S$ denotes the subset of nonarchimedean primes, $S_{\mathbb{R}} \subset S_{\infty}$ is the subset of real primes, and $c_{v} \in G_{K}$ is a chosen complex conjugation for each $v\in S_{\mathbb{R}}$. The module $M' := \operatorname{Hom}(M,\overline{K}^{\times})$ is the Cartier dual of $M$, viewed as a $G_{K}$-module via the canonical quotient $G_{K} \twoheadrightarrow G_{K,S}$.
        
        \item Let $\mathcal{O}_{K,S}$ be the ring of $S$-integers in $K$ and $\mathcal{M}$ be the corresponding locally constant \'etale sheaf on the affine scheme $\operatorname{Spec}(\mathcal{O}_{K,S})$. Then the equality \eqref{euler} holds if and only if the natural morphism
        \begin{equation}\label{phi2}
        	\phi_{2}: H^{2}(G_{K,S}, M) \to H^{2}_{\et}(\operatorname{Spec}(\mathcal{O}_{K,S}),\mathcal{M})
        \end{equation}
        is an isomorphism, where $H^{2}_{\et}(\operatorname{Spec}(\mathcal{O}_{K,S}), \mathcal{M})$ denotes the second \'etale cohomology group of the sheaf $\mathcal{M}$.

	   	\item Assume that $p\neq 2$ or that $K$ is totally imaginary. Suppose that $\mathcal{T}$ is a set of primes of $K$ of Dirichlet density zero such that $\mathcal{T}\cap S=\emptyset$. Then there exists a finite set $S_{0}$ of primes of $K$, disjoint from $S\cup \mathcal{T}$, such that the set of nonarchimedean primes in $S\cup S_{0}$ is nonempty and, when $M$ is viewed as a $G_{K,S\cup S_{0}}$-module, the equality in \eqref{euler} holds, i.e. 
	   	\begin{equation}\label{nice}
	   		\chi_{2}(G_{K,S\cup S_{0}},M)= \dfrac{1}{[M]^{[K:\mathbb{Q}]}}\prod_{v\in S_{\infty}}[H^{0}(G_{v},M)]\cdot
	   		\prod_{v\notin S\cup S_{0}, v\in S_{p}} \dfrac{1}{|[M]|_{v}}.
	   	\end{equation}
	   In particular, if $S\cap S_p=\emptyset$, then one may take $\mathcal{T}=S_p$, in which case \eqref{nice} reduces to
	   		\[	\chi_{2}(G_{K,S\cup S_{0}},M)= \prod_{v\in S_{\infty}}[H^0(G_v,M)].  \]
	   \end{enumerate}
	   \end{Thm}
	  
	    \begin{Rem}
	   	\begin{enumerate}
	   		\item In general, examples can easily be constructed in which the inequality \eqref{euler} is strict; see Example \ref{anexample}. 
	   		\item Note that if $S$ contains all primes above $p$, then $\phi_{2}$ in \eqref{phi2} is an isomorphism by \cite[Proposition 2.9, Chapter II]{MR2261462} and hence the inequality \eqref{euler} is actually an equality, which coincides exactly with \eqref{usualeuler}. Therefore, our theorem can be viewed as a generalization of Tate's global Euler--Poincar\'e characteristic formula.
	   		\item When $M$ is a finite-dimensional vector space over the finite field $\mathbb{F}_{p}$ of order $p$, Liu established a weaker version of assertion (1) in \cite[Proposition 9.4]{MR4896734}.
	   	\end{enumerate}
	   \end{Rem}
	   
	  This theorem provides the optimal upper bound for the general formula of the second partial Euler--Poincar\'e characteristic $\chi_{2}(G_{K,S},M)$. Moreover, it shows that the upper bound can be attained by suitably enlarging the set of primes by a finite set, which can be chosen to be disjoint from any prescribed set of primes of density zero, thereby yielding an explicit formula.
	   
	  The proof strategy of the theorem involves a comparison between the \'etale cohomology groups and the Galois cohomology groups, along with an exact computation of the \'etale cohomology groups. Furthermore, the third part of the theorem is established by refining the techniques of Schmidt \cite{MR2365909}.
	  
   \subsection{The presentation of the group $G_{K,S}$}
   In this subsection, we present an application of Theorem \ref{main} to the presentation of the profinite group $G_{K,S}$.
   If $G$ is a topologically finitely generated profinite group, then we denote by $d(G)$ the minimal number of topological generators of $G$ and by $r(G)$ the minimal number of relations in a profinite presentation of $G$. We establish the following theorem, refining the result of \cite[Theorem 1.2]{MR4896734}.

   \begin{Thm}\label{presentation}
   	Let $K$ be a number field and $S$ a finite set of primes of $K$ containing all archimedean primes. Suppose that the group $G_{K,S}$ is topologically finitely generated. Then the following assertions hold.
   	\begin{enumerate}
   		\item We have
   		\begin{equation}\label{numberof}
   	  0\leq r(G_{K,S})-d(G_{K,S})\leq \mathcal{R}(K)-\gamma,
   		\end{equation}
   		where $\mathcal{R}(K)$ denotes the number of archimedean primes of $K$ and
   		\[ \gamma:=\begin{cases}
   			0,&\text{if $S_{\mathrm{fin}}=\emptyset$},\\
   			1,&\text{if $S_{\mathrm{fin}}\neq \emptyset$}.
   		\end{cases} \]
   		Furthermore, the profinite group $G_{K,S}$ admits a finite presentation on $d(G_{K,S})$ generators and $\mathcal{R}(K)+d(G_{K,S})-\gamma$ relations.
   		\item There exists a finite set $S_{0}$ of nonarchimedean primes of $K$ such that 
   		\[ r(G_{K,S\cup S_{0}})-d(G_{K,S\cup S_{0}})=\mathcal{R}(K)-1.\]
   	\end{enumerate}
   \end{Thm}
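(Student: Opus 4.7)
The plan is to deduce Theorem \ref{presentation} from Theorem \ref{main} applied to the trivial $G_{K,S}$-module $\mathbb{F}_p$, running through all primes $p$, and to translate the resulting cohomological bounds into statements about the generators and relations of $G_{K,S}$ via its maximal pro-$p$ quotients.

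Setting $M=\mathbb{F}_p$ in Theorem \ref{main}(1) and taking $\log_p$ of (\ref{euler}), I use the identities $[\mathbb{F}_p]=p$, $[H^0(G_v,\mathbb{F}_p)]=p$ at every archimedean place $v$, $|p|_v^{-1}=p^{[K_v:\mathbb{Q}_p]}$, and $\sum_{v\in S_p}[K_v:\mathbb{Q}_p]=[K:\mathbb{Q}]$. Writing $h^i=\dim_{\mathbb{F}_p} H^i(G_{K,S},\mathbb{F}_p)$ and using $h^0=1$, this gives
\[
h^2-h^1\;\leq\;\mathcal{R}(K)-1-\Bigl([K:\mathbb{Q}]-\sum_{v\notin S,\,v\in S_p}[K_v:\mathbb{Q}_p]\Bigr)+\log_p\epsilon\;\leq\;\mathcal{R}(K)-\gamma,
\]
where the last estimate uses that the parenthesized term is nonnegative and that $\log_p\epsilon\leq 1-\gamma$: when $S^{\mathrm{finite}}\neq\emptyset$ one has $\epsilon=1$, hence $\log_p\epsilon=0$; otherwise $M'=\mu_p$ forces $\epsilon=[\mu_p(K)]\leq p$, so $\log_p\epsilon\leq 1$.

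I next transfer these cohomological bounds to pro-$p$ presentations. For the maximal pro-$p$ quotient $G_{K,S}(p)$, inflation identifies $d(G_{K,S}(p))$ with $h^1$, and the five-term exact sequence for $G_{K,S}\twoheadrightarrow G_{K,S}(p)$ produces an injection $H^2(G_{K,S}(p),\mathbb{F}_p)\hookrightarrow H^2(G_{K,S},\mathbb{F}_p)$, so $r(G_{K,S}(p))\leq h^2$ and therefore $r(G_{K,S}(p))-d(G_{K,S}(p))\leq\mathcal{R}(K)-\gamma$ uniformly in $p$. Assembling these pro-$p$ presentations into a profinite one (via the construction used in \cite[Theorem 1.2]{MR4896734}) yields a finite profinite presentation of $G_{K,S}$ on $d(G_{K,S})$ generators and at most $d(G_{K,S})+\mathcal{R}(K)-\gamma$ relations, proving the upper bound in (\ref{numberof}) and the explicit presentation asserted in (1). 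The lower bound $\hat r(G_{K,S})\geq d(G_{K,S})$ is secured by exhibiting a prime $p$ with $h^2\geq h^1$, which follows from the local contributions to $H^2(G_{K,S},\mathbb{F}_p)$ at nonarchimedean places in $S$ and at archimedean places.

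For part (2), I apply Theorem \ref{main}(2) with $M=\mathbb{F}_p$ for a prime $p$ chosen large enough so that $S\cap S_p=\emptyset$, and with $\mathcal{T}=S_p$ (of Dirichlet density zero); the resulting $S_0$ is then disjoint from $S_p$, so $\sum_{v\notin S\cup S_0,\,v\in S_p}[K_v:\mathbb{Q}_p]=[K:\mathbb{Q}]$. Since $S_0$ is required to produce a nonarchimedean prime in $S\cup S_0$, one has $\gamma=1$, and combined with the equality in (\ref{nice}) this forces $h^2-h^1=\mathcal{R}(K)-1$ for the chosen $p$; the profinite presentation formula then yields $\hat r(G_{K,S\cup S_0})-d(G_{K,S\cup S_0})=\mathcal{R}(K)-1$. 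The principal obstacles are the lower-bound assertion in (1), whose proof requires delicate arithmetic input to guarantee enough relations at small primes, and in part (2), ensuring that the supremum over $p$ of the pro-$p$ defects is realized exactly at the chosen $p$, which may require enlarging $S_0$ by further auxiliary primes drawn from outside the density-zero exclusion set.
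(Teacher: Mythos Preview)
Your computation of the bound $h^2-h^1\leq \mathcal{R}(K)-\gamma$ for the trivial module $M=\mathbb{F}_p$ is correct and matches the paper's Cases~3 and~4. The difficulty is the next step. The quantity $\hat r(G_{K,S})$ is \emph{not} controlled by the trivial modules alone: Lubotzky's formula \cite[Theorem~0.2]{MR1848964}, which the paper invokes, reads
\[
\hat r(G_{K,S})=\sup_{p}\sup_{M}\Bigl\{\Bigl\lceil\tfrac{\dim H^{2}(G_{K,S},M)-\dim H^{1}(G_{K,S},M)}{\dim M}\Bigr\rceil+d(G_{K,S})-\xi_{M}\Bigr\},
\]
where $M$ ranges over \emph{all} simple $\mathbb{F}_p[[G_{K,S}]]$-modules. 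Your proposal bypasses this by ``assembling pro-$p$ presentations into a profinite one,'' but a uniform bound on $r(G_{K,S}(p))-d(G_{K,S}(p))$ does not by itself yield a presentation of $G_{K,S}$ with the same deficiency: $G_{K,S}$ is not recoverable from its maximal pro-$p$ quotients, and relations mixing several primes are invisible at the pro-$p$ level. This is precisely why the paper treats four separate cases ($M=\mu_p$ with nontrivial action; $M$ nontrivial and $\neq\mu_p$; $M$ trivial with $S^{\mathrm{finite}}=\emptyset$; $M$ trivial with $S^{\mathrm{finite}}\neq\emptyset$), applying Theorem~\ref{main} to each simple $M$ to bound the Lubotzky supremum directly. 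Unless you can point to a concrete theorem in \cite{MR4896734} that performs the assembly you describe with the sharp constant $\mathcal{R}(K)-\gamma$, this step is a genuine gap.

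Your justification for the lower bound is also too vague. ``Local contributions to $H^2$'' do not give a lower bound on global $H^2$. The paper's argument is cleaner: pick $p$ with $S\cap S_p=\emptyset$; then class field theory forces $G_{K,S}(p)^{\mathrm{ab}}$ to be finite, which for a pro-$p$ group implies $\dim H^2(G_{K,S}(p),\mathbb{F}_p)\geq\dim H^1(G_{K,S}(p),\mathbb{F}_p)$, and one then passes to $G_{K,S}$ via inflation. For part~(2) your idea is exactly the paper's (choose an odd $q$ with $S_q\cap S=\emptyset$ and apply Theorem~\ref{main}(2)); your worry that the supremum over $p$ might not be realized at the chosen prime is unnecessary, since the upper bound $\mathcal{R}(K)-1$ already follows from part~(1) once $(S\cup S_0)^{\mathrm{finite}}\neq\emptyset$.
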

    It is easy to construct examples for which the inequality in \eqref{numberof} is strict; see Example \ref{anexample}. The proof of this theorem will be established by combining the work of Lubotzky in \cite{MR1848964} with our main Theorem \ref{main}. 
   
	\subsection{Mazur's dimension conjecture} 
	In this subsection, we present applications of our main theorem to Galois deformation theory. Let $p$ be a prime, $K$ a number field, and $S$ a finite set of primes of $K$. Let $\mathbb{F}$ be a finite field of characteristic $p$. Let $\overline{\rho}: G_{K,S} \to \mathrm{GL}_{n}(\mathbb{F})$ be a continuous, absolutely irreducible representation. Let $W(\mathbb{F})$ denote the ring of Witt vectors of $\mathbb{F}$. If $(A, \mathfrak{m})$ is a complete Noetherian local $W(\mathbb{F})$-algebra with residue field $\mathbb{F}$, then a deformation $\rho$ of $\overline{\rho}$ to $A$ consists of an equivalence class of homomorphisms $\rho: G_{K,S} \to \mathrm{GL}_{n}(A)$ such that the composition of $\rho$ with the natural projection $\mathrm{GL}_{n}(A) \to \mathrm{GL}_{n}(A/\mathfrak{m}) = \mathrm{GL}_{n}(\mathbb{F})$ is $\overline{\rho}$. Here, two lifts $\rho_{1}$ and $\rho_{2}$ of $\overline{\rho}$ to $A$ are said to be equivalent if $\rho_{1} = B\rho_{2}B^{-1}$ for some $B \in \mathrm{GL}_{n}(A)$ congruent to the identity matrix modulo $\mathfrak{m}$. In \cite{MR1012172}, Mazur proved that a universal deformation ring $R_{\overline{\rho}}$ of $\overline{\rho}$ and a universal deformation $\rho^{\mathrm{univ}}: G_{K,S} \to \mathrm{GL}_{n}(R_{\overline{\rho}})$ exist. In \cite[Section 1.10]{MR1012172}, Mazur proposed the following conjecture regarding the Krull dimension of this universal deformation ring.
	
\begin{conj}[Mazur's dimension conjecture]\label{mazdimension}
	If $S$ contains $S_{p}$ and $S_{\infty}$, then the universal deformation ring $R_{\overline{\rho}}$ of $\overline{\rho}$ satisfies 
	\[ \mathrm{Krulldim}(R_{\overline{\rho}}/(p)) = h^{1}(G_{K,S}, \mathrm{ad}) - h^{2}(G_{K,S}, \mathrm{ad}), \]
	where $h^{i}(-) := \dim_{\mathbb{F}} H^{i}(-)$ for any $i$, and $\mathrm{ad} := \mathrm{ad}(\overline{\rho})$ is the adjoint representation of $\overline{\rho}$.
\end{conj}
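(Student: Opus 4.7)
The plan is to follow Mazur's presentation-theoretic analysis of the universal deformation ring. The central input is his minimal presentation
\[ \varphi\colon W(\mathbb{F})[[T_{1},\ldots,T_{g}]]\twoheadrightarrow R_{\overline{\rho}} \]
with $g=h^{1}(G_{K,S},\operatorname{ad})$ and $J:=\ker\varphi\subset (\mathfrak{m},p)^{2}$, where standard obstruction calculus provides a bound $r:=\dim_{\mathbb{F}}J/\mathfrak{m}J\leq h^{2}(G_{K,S},\operatorname{ad})$ via a natural map $J/\mathfrak{m}J\to H^{2}(G_{K,S},\operatorname{ad})$. Krull's height theorem then yields the standard lower bound
\[ \dim R_{\overline{\rho}}/(p)\;\geq\;g-r\;\geq\;h^{1}(G_{K,S},\operatorname{ad})-h^{2}(G_{K,S},\operatorname{ad}), \]
which is the easy half of the conjecture, provided $p$ is not a zero divisor in $R_{\overline{\rho}}$.

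For the matching upper bound, I would aim to establish simultaneously that (i) the bound $r\leq h^{2}$ is sharp, i.e.\ $r=h^{2}$; and (ii) these $h^{2}$ relations form a regular sequence, so that $R_{\overline{\rho}}/(p)$ is a complete intersection of the expected dimension. Since $S\supseteq S_{p}\cup S_{\infty}$, the adjoint module $\operatorname{ad}$ has order an $S$-unit, so Tate's formula \eqref{usualeuler} applies in its classical form and gives the clean identity
\[ h^{0}-h^{1}+h^{2}\;=\;\sum_{v\mid\infty}h^{0}(G_{v},\operatorname{ad})-[K:\mathbb{Q}]n^{2}, \]
translating the conjectured dimension into a purely local archimedean quantity. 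The natural tools for~(i) are an analysis of the tangent/obstruction complex of the deformation functor at $\overline{\rho}$; the natural tools for~(ii) are Poitou--Tate duality together with a local dimension count, comparing the global lifting against the universal local lifting rings at the primes of $S$.

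The decisive obstacle is step~(i). The obstruction-to-relations map $J/\mathfrak{m}J\to H^{2}(G_{K,S},\operatorname{ad})$ need not be surjective: there can be ``hidden'' relations among the $g$ generators which are forced by global reciprocity or by the structure of the locus unramified outside $S$, but are invisible to Galois $H^{2}$, while at the same time some classes in $H^{2}$ fail to arise as obstructions to any genuine relation. Ruling out such discrepancies appears to require strong auxiliary hypotheses on $(K,S,\overline{\rho})$ — for instance, enlarging $S$ by Taylor--Wiles primes to kill a dual Selmer group, or imposing Leopoldt-type vanishing — and no uniform cohomological mechanism is available over a general number field. The paper's abstract announces counterexamples to the conjecture over every number field, confirming that the strategy above cannot succeed in full generality; the hidden relations identified here are precisely the phenomenon the paper exploits.
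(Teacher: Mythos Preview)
The statement you were asked to address is Conjecture~\ref{mazdimension}, not a theorem: the paper states it as Mazur's open conjecture and offers no proof. There is therefore no ``paper's own proof'' to compare your attempt against, and a proof proposal is out of place. Your write-up in fact acknowledges this implicitly, since it sketches the standard presentation-theoretic lower bound and then explains why the matching upper bound cannot presently be established.

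There is, however, a substantive error in your final paragraph. You assert that the paper's counterexamples show Mazur's conjecture fails over every number field, and hence that your strategy ``cannot succeed in full generality.'' This misreads the paper. The counterexamples (Theorem~\ref{counterexampletodimensionalconj}) are to Gouv\^ea's generalization, Conjecture~\ref{dimension}, which drops the hypothesis $S\supseteq S_{p}\cup S_{\infty}$; indeed the construction explicitly requires $S\cap S_{p}=\emptyset$. Mazur's original conjecture---the statement you were given, with $S\supseteq S_{p}\cup S_{\infty}$---is \emph{not} contradicted by anything in the paper and remains open. Your invocation of Tate's formula~\eqref{usualeuler} is valid precisely because of that hypothesis, and that is exactly what distinguishes the two conjectures. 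So while your diagnosis of the obstruction (non-surjectivity of $J/\mathfrak{m}J\hookrightarrow H^{2}$) is the right place to locate the difficulty, the paper gives no evidence that the obstruction is genuinely fatal in Mazur's setting.

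A minor technical remark: the caveat ``provided $p$ is not a zero divisor in $R_{\overline{\rho}}$'' is unnecessary for the lower bound. Once $R_{\overline{\rho}}\cong W(\mathbb{F})[[T_{1},\ldots,T_{g}]]/J$ with $J$ generated by $r$ elements, reducing mod $p$ exhibits $R_{\overline{\rho}}/(p)$ as a quotient of $\mathbb{F}[[T_{1},\ldots,T_{g}]]$ by $r$ elements, and Krull's height theorem gives $\dim R_{\overline{\rho}}/(p)\geq g-r$ unconditionally.
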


In \cite[Conjecture 1]{MR1357212} and \cite[Lecture 4]{MR1860043}, Gouv\^ea proposed an extension of Conjecture \ref{mazdimension} to broader contexts.

\begin{conj}\label{dimension}
	Conjecture \ref{mazdimension} holds without the condition that $S \supset S_{p} \cup S_{\infty}$.
\end{conj}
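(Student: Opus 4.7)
Since the abstract announces counterexamples to Conjecture \ref{dimension} over every number field, the natural plan is not to prove but to refute it, using Theorems \ref{main} and \ref{presentation} as the main inputs. The starting point is Mazur's one-sided presentation argument: for any absolutely irreducible $\overline\rho$, the ring $R_{\overline\rho}$ is a quotient of $W(\mathbb{F})[[T_1,\dots,T_{h^1}]]$ by at most $h^2$ relations (with $h^i:=h^i(G_{K,S},\text{ad})$), yielding the unconditional lower bound $\dim R_{\overline\rho}/(p)\ge h^1-h^2$. To disprove the conjecture it therefore suffices to exhibit, for each number field $K$, a triple $(p,S,\overline\rho)$ with $S\not\supset S_p$ making this inequality strict.

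The key observation is that when $S\not\supset S_p$, the Euler--Poincar\'e characteristic $\chi_2(G_{K,S},\text{ad})$ is genuinely larger than what Tate's classical formula (\ref{usualeuler}) would dictate, by a factor of $\prod_{v\not\in S,\,v\in S_p}|[\text{ad}]|_v^{-1}$ coming from (\ref{euler}); consequently $h^1-h^2$ is strictly smaller than in the classical setting, while the deformation ring itself need not shrink proportionally because global class field theory still supplies many lifts. By Theorem \ref{main}(2) (valid when $p\ne 2$ or $K$ is totally imaginary), I enlarge $S$ by a finite set $S_0$ disjoint from $S_p$ so that $\chi_2(G_{K,S\cup S_0},\text{ad})$ is given exactly by (\ref{nice}), pinning down $h^1-h^2$ numerically. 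At the same time, Theorem \ref{presentation}(2) arranges $\hat r(G_{K,S\cup S_0})-d(G_{K,S\cup S_0})=\mathcal R(K)-1$, which controls the pro-$p$ abelian quotient of $G_{K,S\cup S_0}$ and guarantees the presence of many independent characters in which to twist $\overline\rho$.

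The concrete construction then is: choose $\overline\rho$ induced from a character of a suitable quadratic subfield of $K$, so that $\text{ad}$ splits into one-dimensional pieces governed by abelian class field theory; compute $h^1-h^2$ explicitly from (\ref{nice}); and build lifts of $\overline\rho$ by twisting against characters of the pro-$p$ abelian quotient of $G_{K,S\cup S_0}$ whose $\mathbb{Z}_p$-rank, made large by Theorem \ref{presentation}(2), exceeds $h^1-h^2$. This yields a deformation family of dimension strictly greater than the predicted one, contradicting Conjecture \ref{dimension}. The main obstacle will be this last step: producing genuinely independent deformations whose existence is certified without a circular appeal to $\dim R_{\overline\rho}$ itself. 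I expect the dihedral/CM induction to resolve this, since the extra lifts then correspond to ray class characters of the quadratic subfield and can be counted directly by class field theory rather than by further cohomology of $\text{ad}$.
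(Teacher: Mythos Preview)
You correctly identify that the task is to refute Conjecture~\ref{dimension}, and that Theorem~\ref{main}(2) is the main tool: after enlarging $S$ by a finite set disjoint from $S_p$, the formula~(\ref{nice}) pins down $\chi_2(G_{K,S\cup S_0},\mathrm{ad})$ exactly. But from that point on you overcomplicate the argument and introduce a genuine error.

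The paper's route is far shorter. Once~(\ref{nice}) holds and $(S\cup S_0)\cap S_p=\emptyset$, one has
\[
h^1-h^2 \;=\; h^0(G_{K,S\cup S_0},\mathrm{ad}) - \sum_{v\in S_\infty}\dim H^0(G_v,\mathrm{ad})
\;=\; 1 - \sum_{v\in S_\infty}\dim H^0(G_v,\mathrm{ad}),
\]
using absolute irreducibility for $h^0=1$. For $n\ge 2$ each archimedean term is at least $2$, so $h^1-h^2<0$. Since $\mathrm{Krulldim}(R_{\overline\rho}/(p))\ge 0$ trivially, the conjectural equality $\mathrm{Krulldim}=h^1-h^2$ fails. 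No lifts, no twisting, no Theorem~\ref{presentation}, no dihedral induction are needed.

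Your plan to produce a positive-dimensional family by twisting against characters of the pro-$p$ abelian quotient of $G_{K,S\cup S_0}$ cannot work as stated: when $(S\cup S_0)\cap S_p=\emptyset$, every $\mathbb{Z}_p$-extension of $K$ is ramified at some prime above $p$, so the pro-$p$ abelianization of $G_{K,S\cup S_0}$ is \emph{finite} and its $\mathbb{Z}_p$-rank is zero, not large. Theorem~\ref{presentation}(2) concerns the relation deficiency $\hat r-d$, not the $\mathbb{Z}_p$-rank of the abelianization, and does not supply the characters you want. Drop that entire step and conclude directly from $h^1-h^2<0$.
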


By \cite[Theorem 4.2]{MR1860043}, it is always true that 
\[ \mathrm{Krulldim}(R_{\overline{\rho}}/(p)) \geq h^{1}(G_{K,S}, \mathrm{ad}) - h^{2}(G_{K,S}, \mathrm{ad}),\]
and hence the conjecture is equivalent to the claim that
\begin{equation}\label{mazurequa}
	\mathrm{Krulldim}(R_{\overline{\rho}}/(p)) \leq h^{1}(G_{K,S}, \mathrm{ad}) - h^{2}(G_{K,S}, \mathrm{ad}).
\end{equation}

In \cite[Theorem 1.1]{MR2285736}, Bleher and Chinburg proved that there exist infinitely many real quadratic fields $K$ admitting a mod $2$ representation of $G_{K,\emptyset}$ whose universal deformation ring does not satisfy \eqref{mazurequa}. In the following, we show that Conjecture \ref{dimension} fails for all number fields.

\begin{Thm}\label{counterexampletodimensionalconj}
	Let $p$ be a prime, $K$ a number field, and $S$ a finite set of primes of $K$. Assume that $p \neq 2$ or that $K$ is totally imaginary. Assume further that $S \cap S_{p} = \emptyset$. Let $\overline{\rho}: G_{K,S} \to \mathrm{GL}_{n}(\mathbb{F})$ be a continuous, absolutely irreducible representation, where $n \geq 2$. Then there exists a finite set $T \supset S$ such that $T \cap S_p = \emptyset$ and $h^{1}(G_{K,T}, \mathrm{ad}) - h^{2}(G_{K,T}, \mathrm{ad}) < 0$. In particular, Conjecture \ref{dimension} fails in this case.
\end{Thm}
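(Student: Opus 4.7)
The plan is to apply Theorem~\ref{main}(2) to the $p$-primary $G_{K,S}$-module $M := \text{ad}(\overline{\rho}) \cong \mathfrak{gl}_n(\mathbb{F})$ with $\mathcal{T} := S_p$. Since $S_p$ is finite (hence of Dirichlet density zero) and, by hypothesis, disjoint from $S$, the theorem yields a finite set $S_0$ of primes of $K$, disjoint from $S \cup S_p$, such that $T := S \cup S_0$ satisfies $T \cap S_p = \emptyset$ and, by the ``in particular'' clause,
\[ \chi_2(G_{K,T}, \text{ad}) \;=\; \prod_{v \in S_\infty} [H^0(G_v, \text{ad})]. \]

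Taking $\log_{|\mathbb{F}|}$ of the definition of $\chi_2$ and rearranging, this becomes
\[ h^1(G_{K,T}, \text{ad}) - h^2(G_{K,T}, \text{ad}) \;=\; h^0(G_{K,T}, \text{ad}) - \sum_{v \in S_\infty} h^0(G_v, \text{ad}). \]
Because $\overline{\rho}$ factors through the canonical surjection $G_{K,T} \twoheadrightarrow G_{K,S}$ and remains absolutely irreducible, Schur's lemma gives $H^0(G_{K,T}, \text{ad}) = \mathbb{F} \cdot I$, so the first term on the right equals $1$. For the archimedean sum: each complex place $v$ has $G_v$ trivial and contributes $n^2 \geq 4$; each real place $v$ forces $p \neq 2$ by the standing hypothesis, so $\overline{\rho}(c_v)$ is an involution diagonalizable with $\pm 1$-eigenspaces of dimensions $a$ and $b = n - a$, whose centralizer in $\mathfrak{gl}_n(\mathbb{F})$ has dimension $a^2 + b^2 \geq \lceil n^2/2 \rceil \geq 2$. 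Since $K$ has at least one archimedean place, the sum is $\geq 2$, giving
\[ h^1(G_{K,T}, \text{ad}) - h^2(G_{K,T}, \text{ad}) \;\leq\; 1 - 2 \;=\; -1 \;<\; 0, \]
which contradicts the inequality (\ref{mazurequa}) required by Conjecture~\ref{dimension} (with $T$ in place of $S$), since $\mathrm{Krulldim}(R_{\overline{\rho}}/(p)) \geq 0$.

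The only substantive ingredient is the first step: Theorem~\ref{main}(2) is exactly what is needed to enlarge $S$ by a finite set while avoiding $S_p$ so that the Euler--Poincar\'e formula collapses to an archimedean product. Once that is in hand, the sign of $h^1 - h^2$ is forced by Schur's lemma together with the elementary fact that a diagonalizable involution on $\mathbb{F}^n$ has centralizer of dimension $\geq \lceil n^2/2 \rceil$ in $\mathfrak{gl}_n(\mathbb{F})$. Beyond invoking the main theorem, no further obstacle is anticipated.
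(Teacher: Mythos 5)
Your proposal is correct and follows essentially the same route as the paper: apply Theorem~\ref{main}(2) with $\mathcal{T}=S_p$ to get $T=S\cup S_0$ avoiding $S_p$, then deduce $h^1(G_{K,T},\mathrm{ad})-h^2(G_{K,T},\mathrm{ad})=h^0(G_{K,T},\mathrm{ad})-\sum_{v\in S_\infty}\dim H^0(G_v,\mathrm{ad})=1-(\text{something}\geq 2)<0$ via Schur's lemma and the archimedean bounds. In fact you spell out the real-place centralizer estimate ($a^2+b^2\geq\lceil n^2/2\rceil\geq 2$, using that a real place forces $p\neq 2$) more explicitly than the paper does.
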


The theorem indicates that Conjecture \ref{dimension} frequently fails when $S \cap S_{p} = \emptyset$. This is consistent with the prediction of the celebrated unramified Fontaine--Mazur conjecture. Specifically, we have the following theorem:

\begin{Thm}\label{corofUFM}
	Assume the unramified Fontaine--Mazur conjecture \cite[Conjecture 5a]{MR1363495} holds. Suppose $n \geq 2$ is an integer and $p > 2n^{2} - 1$. Let $K$ be a totally real number field, and let $S$ be a finite set of primes of $K$ such that $S \cap S_{p} = \emptyset$. If $\overline{\rho}: G_{K,S} \to \mathrm{GL}_{n}(\mathbb{F})$ is a continuous representation whose image contains $\mathrm{SL}_{n}(\mathbb{F})$, then we have $h^{1}(G_{K,S}, \mathrm{ad}) - h^{2}(G_{K,S}, \mathrm{ad}) < 0$.
\end{Thm}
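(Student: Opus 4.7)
The plan is to apply the unramified Fontaine--Mazur conjecture together with Mazur's presentation inequality for the universal deformation ring of $\overline{\rho}$. Since the image of $\overline{\rho}$ contains ${\rm SL}_{n}(\mathbb{F})$, $\overline{\rho}$ is absolutely irreducible, so the universal deformation ring $R:=R_{\overline{\rho}}$ exists as a complete Noetherian local $W(\mathbb{F})$-algebra. The tangent space of $R$ is $H^{1}(G_{K,S},\text{ad})$, and the standard obstruction calculus gives a presentation
\[ R \cong W(\mathbb{F})[[T_{1},\ldots,T_{h^{1}}]]/(f_{1},\ldots,f_{r}), \qquad r \leq h^{2}, \]
where $h^{i}:=h^{i}(G_{K,S},\text{ad})$; consequently $\dim R \geq 1 + h^{1} - h^{2}$. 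It therefore suffices to show $\dim R = 0$, since this forces $h^{1}-h^{2}\leq -1<0$.

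The first step rules out characteristic-zero deformations of $\overline{\rho}$. Suppose $\rho\colon G_{K,S}\to {\rm GL}_{n}(\overline{\mathbb{Z}_{p}})$ is a continuous lift of $\overline{\rho}$. The hypothesis $S\cap S_{p}=\emptyset$ ensures that $\rho$ is unramified at every prime of $K$ above $p$, and so by the unramified Fontaine--Mazur conjecture the image $F:=\rho(G_{K,S})$ is finite.

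The second and key step extracts a representation-theoretic contradiction from such an $F$. Let $F_{0}$ denote the kernel of the reduction map $F\to {\rm GL}_{n}(\overline{\mathbb{F}_{p}})$, a finite $p$-group contained in a pro-$p$ subgroup of ${\rm GL}_{n}(\overline{\mathbb{Z}_{p}})$. The quotient $\overline{F}:=F/F_{0}$ contains ${\rm SL}_{n}(\mathbb{F})$; let $F_{1}\subset F$ denote the preimage of ${\rm SL}_{n}(\mathbb{F})$. Since $p>n$, the ${\rm SL}_{n}(\mathbb{F})$-invariants of the adjoint action on $M_{n}(\overline{\mathbb{F}_{p}})$ consist only of scalar matrices; applying this inductively to the successive quotients of the valuation filtration on $F_{0}\cap F_{1}$ forces $F_{0}\cap F_{1}$ into the scalar center of ${\rm GL}_{n}(\overline{\mathbb{Q}_{p}})$. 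Hence $F_{1}$ is a central extension of ${\rm SL}_{n}(\mathbb{F})$ by a cyclic $p$-group, which splits since the relevant Schur multiplier vanishes, and so ${\rm SL}_{n}(\mathbb{F})$ embeds faithfully into ${\rm GL}_{n}(\overline{\mathbb{Q}_{p}})$ via an $n$-dimensional complex representation. The Landazuri--Seitz--Zalesskii lower bound on the minimal dimension of a faithful complex representation of ${\rm SL}_{n}(\mathbb{F}_{q})$ then yields the contradiction precisely in the range $p>2n^{2}-1$, which accounts for the numerical hypothesis.

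Consequently $R[1/p]=0$. Combined with a rigidity argument for $R/p$---any positive-dimensional formal family of residual deformations over $\mathbb{F}$ could, after suitable lifting and specialization, yield a characteristic-zero deformation contradicting the obstruction of the previous step---one concludes that $R$ is Artinian. Hence $\dim R=0$, and the Mazur presentation bound gives $h^{1}-h^{2}\leq -1<0$, as required. The principal obstacle is the second step: the combined representation-theoretic analysis of finite lifts of $\overline{\rho}$ together with the Landazuri--Seitz--Zalesskii estimates, which pin down the explicit numerical bound $p>2n^{2}-1$.
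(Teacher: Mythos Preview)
Your overall strategy---use Mazur's presentation bound $\dim R \geq 1 + h^{1} - h^{2}$ and then bound $\dim R$ from above---matches the paper's, and your representation-theoretic argument ruling out characteristic-zero lifts is essentially a sketch of Manoharmayum's structure theorem \cite[Main theorem]{MR3336600}, which the paper simply cites. So far so good.

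The genuine gap is the ``rigidity argument for $R/p$''. The assertion that a positive-dimensional family of \emph{mod $p$} deformations can, ``after suitable lifting and specialization,'' produce a characteristic-zero deformation is exactly what fails in general: if $R=\mathbb{F}[[t]]$ there is no characteristic-zero point to specialize to, and nothing in your argument prevents this. Passing from $R[1/p]=0$ to $\dim R=0$ is the hard step; it is the content of Allen's finiteness theorem \cite[Theorem 1]{MR3294389}, whose proof is substantially more delicate than a lifting/specialization manoeuvre (one must analyse the image of a deformation to $k[[t]]$, pass to an extension of $K$, and invoke the unramified Fontaine--Mazur conjecture again over that larger field). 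The paper sidesteps your gap by quoting Allen's result directly to obtain that $R$ is \emph{finite} over $W(\mathbb{F})$; then, assuming $h^{1}-h^{2}\geq 0$ for contradiction, a standard commutative-algebra lemma \cite[Lemma 2, Appendix]{MR2004460} forces $R$ to be finite \emph{flat} over $W(\mathbb{F})$, hence to have a characteristic-zero point, and Manoharmayum plus unramified Fontaine--Mazur give the contradiction. In short: you have reconstructed the characteristic-zero half of the argument, but the characteristic-$p$ half is not a formality and needs Allen's theorem (which is also where the hypotheses $p>2n^{2}-1$ and $\mathrm{im}(\overline{\rho})\supset \mathrm{SL}_{n}(\mathbb{F})$ are fully used).
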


	\subsection{Organization and Notation} This paper is organized as follows. In Section \ref{calculationofetalecohomology}, we establish the second partial Euler--Poincar\'e characteristic for \'etale cohomology groups of number fields. Section \ref{proofofmaintheorem} is devoted to the proof of Theorem \ref{main}. Theorem \ref{presentation} is proved in Section \ref{proofofsecondtheorems}. An example is given in Section \ref{examples}. Finally, Section \ref{proofofothertheorems} contains the proofs of Theorems \ref{counterexampletodimensionalconj} and \ref{corofUFM}.
	
	Throughout this paper, $ p $ will be a prime number, $ \mathbb{Z} $ will be the ring of integers, and $ \mathbb{Q} $ will be the field of rational numbers. Let $ \mathbb{F}_{p} $ be the finite field of order $ p $. If $F$ is a field, then we denote by $\overline{F}$ an algebraic closure of $F$. If $M$ is a finite group, then we denote by $[M]$ the cardinality of $M$.
	
	If $ K $ is a number field, then we denote by $ S_{p}=S_{p}(K) $ the set of primes of $ K $ above $ p $, by $ S_{\infty}=S_{\infty}(K) $ the set of all archimedean primes of $ K $ and by $S_{\mathbb{R}}$ the subset of $S_{\infty}$ consisting of real primes. If $ S $ is a set of primes of $ K $, then  we denote by $S_{\mathrm{fin}}\subset S$ the subset of all nonarchimedean primes of $K$ in $S $. Also, we denote by $K_{S}$ (resp. $K_{S}(p)$) the maximal extension (resp. maximal $p$-extension) of $K$ inside $\overline{K}$ which is unramified outside $S$. Put $G_{K,S}:=\operatorname{Gal}(K_{S}/K)$ and $G_{K,S}(p):=\operatorname{Gal}(K_{S}(p)/K)$. Also, we denote by $\mathcal{O}_{K,S}:=\{a\in K \mid \mathrm{ord}_{v}(a)\geq 0~\text{for all}~v\notin S\}$ the ring of $S$-integers in $K$ if $S$ contains $S_{\infty}(K)$ where $ \mathrm{ord}_{v}$ is the valuation associated with $v$. Let $\mathcal{R}(K)$ denote the number of archimedean primes of $K$. For any prime $v$ of $K$, we denote by $G_{v}$ the absolute Galois group of the completion $K_{v}$ of $K$ at $v$. 
	
	 If $M$ is a finite $G_{k}$-module where $G_{k}:=\operatorname{Gal}(k^{s}/k)$ is the absolute Galois group of a field $k$, then we denote by $M':=\operatorname{Hom}(M, {k^s}^{\times})$ the Cartier dual of the $G_{k}$-module $M$ where the action of $G_{k}$ on $M'$ is defined by $(g\cdot \varphi)(x):=g(\varphi(g^{-1}\cdot x))$ for $g \in G_{k},\varphi\in M',x\in M$.

	\section{Calculation of the second partial Euler--Poincar\'e characteristics for \'etale cohomology}\label{calculationofetalecohomology}
	
	Our goal in this section is to calculate the second partial Euler--Poincar\'e characteristic for \'etale cohomology groups of number fields. Let $K$ be a number field and $S$ be a finite set of primes of $K$ containing all archimedean primes. Put $U:=\operatorname{Spec}(\mathcal{O}_{K,S})$ and $\eta:=\operatorname{Spec}(K)$. Then we have $\pi_1(U,\bar{\eta})=G_{K,S}$ where $\bar{\eta}:\operatorname{Spec}(\overline{K})\to U$ is a geometric generic point and $\pi_{1}$ denotes the \'etale fundamental group, and there is an equivalence between the category of locally constant constructible \'etale sheaves on $U$ and the category of finitely generated discrete $G_{K,S}$-modules. Let $M$ be a finite discrete $G_{K,S}$-module, and let $\mathcal{M}$ be the corresponding locally constant constructible sheaf on $U$.

	   If $v$ is a prime of $K$, then we will denote by $K_{v}$ the completion of $K$ at $v$ and $\eta_{v}:=\operatorname{Spec}(K_{v})$ via the composite $\eta_{v}\to \operatorname{Spec}(K)\to U$ by $(-)_{\eta_{v}}$. Denote by $R\Gamma(U,-)$ (resp. $R\Gamma(K_{v},(-)_{\eta_{v}})$) the \'etale cohomology on $U$ (resp. $\operatorname{Spec}(K_{v})$), and  denote by $H^{\ast}_{\et}(U,\mathcal{M})$ (resp. $H_{\et}^{\ast}(K_{v},\mathcal{M})$) the cohomology groups of the complex $R\Gamma(U,\mathcal{M})$ (resp. $R\Gamma(K_{v},\mathcal{M})$). Note that here $R\Gamma(K_{v},-)$ is just the Galois cohomology of the absolute Galois group $G_{v}$ of $K_{v}$. For any prime $v$ of $K$, we have a natural morphism
	   \[ R\Gamma(U,-)\to R\Gamma(K_{v},(-)_{\eta_{v}}). \]
	   We define the \'etale cohomology groups with compact support $H^{\ast}_{c}(U,\mathcal{M})$ of the sheaf $\mathcal{M}$ on $U$ to be the cohomology groups of the complex $R\Gamma_{c}(U,\mathcal{M})$ which is defined by the fiber sequence
	   \[ R\Gamma_{c}(U,\mathcal{M})\to R\Gamma(U,\mathcal{M})\to \prod_{v\in S}R\Gamma(K_{v},\mathcal{M}). \]
	   By \cite[Proposition 2.1]{MR4502240}, there is a long exact sequence of groups
	   \begin{align}\label{qiechu}
	   	 \cdots \to H^{i}_{c}(U,\mathcal{M})\to H_{\et}^{i}(U,\mathcal{M})\to \prod_{v\in S}H_{\et}^{i}(K_{v},\mathcal{M})\to H^{i+1}_{c}(U,\mathcal{M})\to \cdots 
	   \end{align}
	   
	   \begin{Prop}\label{gaoweitrivial}
	   With the notation as above, the cohomology groups $H^{i}_{c}(U,\mathcal{M})$ vanish for all $i\neq 0,1,2,3$. Moreover, $H^{i}_{c}(U,\mathcal{M})$ and $H^{i}_{\et}(U,\mathcal{M})$ are finite for all $i=0,1,2,3$. 
	   \end{Prop}
	   \begin{proof}
	   	By \cite[Theorem 3.2]{MR4502240}, the cohomology groups $H^{i}_{c}(U,\mathcal{M})$ are $0$ for $i\neq 0,1,2,3$ and finite for $i=0,1,2,3$. For any $v\in S$ and any $i$, it follows from \cite[Theorem~7.1.8]{MR2392026} that the group $H^{i}_{\et}(K_{v},\mathcal{M})\cong H^{i}(G_{v},M)$ is finite. Since $S$ is a finite set, it follows from (\ref{qiechu}) that the cohomology groups $H^{i}_{\et}(U,\mathcal{M})$ are finite for all $i=0,1,2,3$.
	   \end{proof}
	   
	 Therefore, we can define
	   \[ \chi_{c}(U,\mathcal{M}):=\dfrac{[H^{0}_{c}(U,\mathcal{M})][H_{c}^{2}(U,\mathcal{M})]}{[H_{c}^{1}(U,\mathcal{M})][H_{c}^{3}(U,\mathcal{M})]},\]
	   and
	   \[ \chi_{3}(U,\mathcal{M}):=\dfrac{[H^{0}_{\et}(U,\mathcal{M})][H_{\et}^{2}(U,\mathcal{M})]}{[H_{\et}^{1}(U,\mathcal{M})][H_{\et}^{3}(U,\mathcal{M})]}. \]
	   
	   The following theorem is due to Adrien Morin.
	  \begin{Thm}\label{eulercharacteristicformulaetale}
	With the notation as above, we have $\chi_{c}(U,\mathcal{M})=1$ and
	\[	\chi_{3}(U,\mathcal{M})= \dfrac{1}{[M]^{[K:\mathbb{Q}]}}
	\prod_{v\notin S \cup S_{\infty}} 1/|[M]|_v \cdot \prod_{v\in S_{\infty}} \frac{[H^0(G_v,M)]}{[\widehat{H}^0(G_{v},M)]}, \]
	where $|~|_{v}$ is the normalized absolute value associated to the prime $v$, \footnote{That is, $|x|_{v}=q^{-\operatorname{ord}_{v}(x)}$ where $q$ is the cardinality of the residue field of $v$ and $\operatorname{ord}_{v}$ is the additive valuation with value group $\mathbb{Z}$, cf. \cite[Section 1, Chapter III]{MR1697859}.} and $\widehat{H}^{0}(G_{v},M)$ denotes the $0$-th Tate cohomology of the $G_{v}$-module $M$.
	\end{Thm}
	\begin{proof}
	It follows from \cite[Proposition 6.23]{MR4502240} that $\chi_c(U,\mathcal{M})=1$. Moreover, combining Proposition \ref{gaoweitrivial} with (\ref{qiechu}), we obtain
		\begin{equation}\label{11}
			\chi_{3}(U,\mathcal{M})=\prod_{v\in S}\prod^{3}_{i=0}[H_{\et}^{i}(K_{v},\mathcal{M})]^{(-1)^{i}}.
		\end{equation}
		
	For each $v\in S$, we put $\chi(K_{v},\mathcal{M}):=\prod_{i=0}^{3}[H^{i}_{\et}(K_{v},\mathcal{M})]^{(-1)^{i}}$. By \cite[Theorem 7.1.8, Theorem 7.3.1 and Proposition 1.7.6]{MR2392026}, we have
	\begin{equation}\label{22}
	\chi(K_{v},\mathcal{M})=\begin{cases}
		|[M]|_{v},& v\in S_{\mathrm{fin}},\\
		\dfrac{[H^{0}(G_{v},M)]}{[\widehat{H}^{0}(G_{v},M)]},& v\in S_{\infty}.
	\end{cases}
	\end{equation}
	
	Furthermore, the product formula \cite[Chapter III, Proposition 1.3]{MR1697859} implies that
	\begin{equation}\label{33}
		\prod_{v\in S_{\mathrm{fin}}}|[M]|_{v}= \prod_{v\in S_{\infty}}1/|[M]|_{v} \cdot \prod_{v\notin S\cup S_{\infty}} 1/|[M]|_v  =\dfrac{1}{[M]^{[K:\mathbb{Q}]}} \prod_{v\notin S\cup S_{\infty}} 1/|[M]|_v.
	\end{equation}

   	Substituting (\ref{22}) and (\ref{33}) into (\ref{11}), we deduce that 
		\begin{align*}
			\chi_{3}(U,\mathcal{M})=& \prod_{v\in S}\prod^{3}_{i=0}[H_{\et}^{i}(K_{v},\mathcal{M})]^{(-1)^{i}} \\
			 =& \prod_{v \in S_{\mathrm{fin}}} |[M]|_v \cdot \prod_{v\in S_{\infty}}\dfrac{[H^{0}(G_{v},M)]}{[\widehat{H}^{0}(G_{v},M)]}\\
			 =& \dfrac{1}{[M]^{[K:\mathbb{Q}]}}.\prod_{v\notin S\cup S_{\infty}} 1/|[M]|_v \cdot \prod_{v\in S_{\infty}} \frac{[H^0(G_v,M)]}{[\widehat{H}^0(G_v,M)]},
		\end{align*}
			which completes the proof.
	\end{proof}
	
	The above theorem generalizes \cite[Proposition~3.2]{MR2365909}. To pass from the third characteristic $\chi_3(U,\mathcal{M})$ to the second partial characteristic $\chi_2(U,\mathcal{M})$, we need an exact description of the third cohomology group with compact support as follows.
	
	\begin{Lem}\label{calculationofH3}
	With the notation as above, we have
		\begin{equation*}
			H^{3}_{c}(U,\mathcal{M})^{\vee}\cong (M')^{G_{K}},
		\end{equation*}
		where $(\cdot)^{\vee}$ denotes the Pontryagin dual, $M'$ is the Cartier dual of $M$, and $M$ is viewed as a $G_{K}$-module via the canonical quotient $G_{K} \twoheadrightarrow G_{K,S}$.
	\end{Lem}
	\begin{proof}
		  By Artin--Verdier duality \cite[Theorem 3.2 and Proposition 2.3]{MR4502240}, we have
		\begin{equation*}
			H^{3}_{c}(U,\mathcal{M})^{\vee}\cong \operatorname{Hom}_{U}(\mathcal{M},\mathbb{G}_{m}).
		\end{equation*}
		where $\mathbb{G}_{m}$ is the multiplicative group sheaf on $U$. Since $M$ is a finite discrete $G_{K,S}$-module, its $G_{K,S}$-action factors through a finite quotient group. Let $L/K$ be the finite Galois extension inside $K_S$ that trivializes the action on $M$, and let $G = \operatorname{Gal}(L/K)$. Let $V = \operatorname{Spec} \mathcal{O}_{L,S}$. The morphism $\pi: V \to U$ is a finite \'etale Galois cover with Galois group $G$. 
		
		Let $\mathcal{H}om(\mathcal{M}, \mathbb{G}_m)$ be the internal hom sheaf on the \'etale site of $U$. By definition, evaluating global sections yields $\operatorname{Hom}_{U}(\mathcal{M}, \mathbb{G}_m) = H^0(U, \mathcal{H}om(\mathcal{M}, \mathbb{G}_m))$. By the fundamental property of Galois descent for \'etale sheaves (cf. \cite[Proposition 1.4, Chapter II]{MR559531}), we have
		\[
		\operatorname{Hom}_{U}(\mathcal{M}, \mathbb{G}_m) \cong \operatorname{Hom}_{V}(\pi^*\mathcal{M}, \pi^*\mathbb{G}_m)^G = \operatorname{Hom}_{V}(\underline{M}_V, \mathbb{G}_m)^G,
		\]
		where $\pi^*\mathcal{M}$ becomes the constant sheaf $\underline{M}_V$ on $V$ by the choice of $L$. Since $V$ is connected, we have a canonical isomorphism
		\[
		\operatorname{Hom}_{V}(\underline{M}_V, \mathbb{G}_m) \cong \operatorname{Hom}_{\text{Group}}(M, \mathbb{G}_m(V)) = \operatorname{Hom}_{\text{Group}}(M, \mathcal{O}_{L,S}^\times).
		\]
		Taking $G$-invariants on both sides gives
		\[
		\operatorname{Hom}_{U}(\mathcal{M}, \mathbb{G}_m) \cong \operatorname{Hom}_{G}(M, \mathcal{O}_{L,S}^\times).
		\]
		
		Now, since $M$ is a finite abelian group, any group homomorphism out of $M$ must map into the torsion subgroup of the target. The torsion subgroup of the $S$-units $\mathcal{O}_{L,S}^\times$ is identical to the group of roots of unity $\mu(L)$ in $L$, which implies that
		\[
		\operatorname{Hom}_G(M, \mathcal{O}_{L,S}^\times) \cong \operatorname{Hom}_G(M, L^\times).
		\]
		
		Finally, a $G$-equivariant homomorphism from $M$ to $L^\times$ is canonically equivalent to a $G_K$-equivariant homomorphism from $M$ to $\overline{K}^\times$, because the absolute Galois group $G_L = \operatorname{Gal}(\overline{K}/L)$ acts trivially on $M$, and its invariant subgroup in $\overline{K}^\times$ is exactly $L^\times$. Thus
		\[
		\operatorname{Hom}_G(M, L^\times) \cong (\operatorname{Hom}(M,\overline{K}^\times))^{G_K}.
		\]
		This completes the proof.
	\end{proof}
	
	We are now ready to establish the main result of this section. Let us define the second partial Euler--Poincar\'e characteristic for the sheaf $\mathcal{M}$ by
 \[	\chi_{2}(U,\mathcal{M}):=\dfrac{[H^{0}_{\et}(U,\mathcal{M})][H_{\et}^{2}(U,\mathcal{M})]}{[H_{\et}^{1}(U,\mathcal{M})]}. \]
	 
	\begin{Thm}\label{calculationofetale}
	With the notation as above, if $M$ is $p$-primary, then we have
	\begin{align*}
	\chi_{2}(U,\mathcal{M})=& \dfrac{1}{[M]^{[K:\mathbb{Q}]}}\prod_{v\in S_{\infty}}[H^{0}(G_{v},M)]\cdot
	\prod_{v\notin S, v\in S_{p}} \dfrac{1}{|[M]|_{v}}\cdot \epsilon, 
	\end{align*}
where
	\[ 
\epsilon := \begin{cases} 
	1, & \text{if } S_{\mathrm{fin}} \neq \emptyset, \\ 
	[(M')^{G_{K}}], & \text{if } S_{\mathrm{fin}} = \emptyset, \text{ with } p\neq 2 \text{ or } K \text{ totally imaginary}, \\ 
	\left[ \bigcap_{v\in S_{\mathbb{R}}} \left( (1+c_{v})M' \cap (M')^{G_{K}} \right) \right], & \text{if } S_{\mathrm{fin}} = \emptyset, \text{ with } p=2 \text{ and } K \text{ not totally imaginary}.
\end{cases} 
\]
Here, $c_{v} \in G_{K}$ denotes a chosen complex conjugation for each $v \in S_{\mathbb{R}}$.
	\end{Thm}
	
	\begin{proof}
	Since $M$ is $p$-primary, we have 
		\[ \prod_{v\notin S \cup S_{\infty}} 1/|[M]|_v=\prod_{v\notin S,v\in S_{p}}1/|[M]|_v.\]
		By Theorem \ref{eulercharacteristicformulaetale}, it remains to calculate
		\begin{equation}\label{55}
		\epsilon:=\prod_{v\in S_{\infty}} \dfrac{1}{[\widehat{H}^{0}(G_{v},M)]} \cdot [H^{3}_{\et}(U,\mathcal{M})].
		\end{equation}
	
	By (\ref{qiechu}) and Proposition \ref{gaoweitrivial}, we have an exact sequence of abelian groups
	\begin{equation}\label{xulie}
		\prod_{v\in S} H_{\et}^{2}(K_{v},\mathcal{M})\xrightarrow{\alpha}H^{3}_{c}(U,\mathcal{M}) \to H^{3}_{\et}(U,\mathcal{M})\to \prod_{v\in S} H_{\et}^{3}(K_{v},\mathcal{M})\to 0.
	\end{equation}
	By \cite[Theorem 7.1.8 and Proposition 1.7.6]{MR2392026}, we have
	\begin{equation}\label{guji}
	\prod_{v\in S}[H^{3}_{\et}(K_{v},\mathcal{M})]= \prod_{v\in S_{\infty}}[\widehat{H}^{0}(G_{v},M)].
	\end{equation}
	Combining these with (\ref{55}) yields
   \[ \epsilon=[\operatorname{coker}(\alpha)]. \]

    By Lemma \ref{calculationofH3}, we have
	\begin{equation}\label{artinverdierdua}
   	H^{3}_{c}(U,\mathcal{M})^{\vee}\cong (M')^{G_{K}}.
	\end{equation}

Let $S_{\mathbb{R}}$ denote the subset of $S_{\infty}$ consisting of real primes. By the local duality theorem \cite[Theorem 7.2.6 and Theorem 7.2.17]{MR2392026}, we have 
	 \begin{align*}
	 	\left( \prod_{v\in S_{\mathrm{fin}}} H_{\et}^{2}(K_{v},\mathcal{M}) \right) ^{\vee}\cong &  \bigoplus_{v\in S_{\mathrm{fin}}}H^{0}(G_{v},M'),
	 \end{align*}
    and 
    \[ \left( \prod_{v\in S_{\infty}} H_{\et}^{2}(K_{v},\mathcal{M})\right) ^{\vee}= \left( \prod_{v\in S_{\mathbb{R}}} H_{\et}^{2}(K_{v},\mathcal{M})\right)^{\vee}\cong  \bigoplus_{v\in S_{\mathbb{R}}}\widehat{H}^{0}(G_{v},M') .\]
    
	If $S_{\mathrm{fin}}\neq \emptyset$, then the dual of the restriction of $\alpha$ to $\prod_{v\in S_{\mathrm{fin}}}H^{2}_{\et}(K_{v},\mathcal{M})$ is the natural inclusion
	 \[    	H^{3}_{c}(U,\mathcal{M})^{\vee}\cong (M')^{G_{K}}\hookrightarrow \bigoplus_{v\in S_{\mathrm{fin}}}(M')^{G_{v}},\]
	 which is injective. It follows that $\alpha$ is surjective, whence $\epsilon=1$.
	 
	 Next, suppose that $S_{\text{fin}} = \emptyset$. Assume further that either $p \neq 2$ or $K$ is totally imaginary. Then we have $\prod_{v\in S}H_{\et}^{2}(K_{v},\mathcal{M})=0$, and hence $\epsilon=[(M')^{G_{K}}]$ by (\ref{artinverdierdua}).
	 
	 If $p=2$, $K$ is not totally imaginary and $S_{\mathrm{fin}}=\emptyset$, then the dual map $\alpha^{\vee}$ of $\alpha$ is the morphism
	 \[ (M')^{G_{K}}\to \bigoplus_{v\in S_{\mathbb{R}}} (M')^{c_{v}}/(1+c_{v})M', \]
	 where $c_{v} \in G_{K}$ denotes a chosen complex conjugation for each $v\in S_{\mathbb{R}}$. Thus, we obtain
	 \[ \epsilon=[\operatorname{coker}(\alpha)]=[\ker(\alpha^{\vee})]=\left[ \bigcap_{v\in S_{\mathbb{R}}}\left( (1+c_{v})M' \cap (M')^{G_{K}} \right)\right]. \]
	 This completes the proof of our theorem.
	\end{proof}
	
	\section{Proof of Theorem \ref{main}}\label{proofofmaintheorem}

\subsection{First observations}	Let $K$ be a number field and $S$ a finite set of primes of $K$ containing all archimedean primes. In this section, we will use the following notation.
	\begin{Convention}
		\begin{itemize}
			\item For a field extension $L$ of $K$, by abuse of notation, we will write $S=S(L)$ for the primes of $L$ above the primes of $K$ in $S$, and $\mathcal{O}_{L,S}$ for the integral closure of $\mathcal{O}_{K,S}$ in $L$. 
			\item If $L/K$ is a finite extension and $T$ is a set of primes of $L$, then we define
			\[ T_{K}:=\{\text{$v$ is a prime of $K$} \mid \text{there exists a prime $w\in T $ above $v$} \}.\]
		\end{itemize}
	\end{Convention}
	
	 Let $M$ be a finite discrete $G_{K,S}$-module, and let $\mathcal{M}$ be the corresponding locally constant sheaf on $\operatorname{Spec}(\mathcal{O}_{K,S})$. Since the morphism $\operatorname{Spec}(\mathcal{O}_{K_{S},S})\to \operatorname{Spec}(\mathcal{O}_{K,S})$ is a Galois covering with Galois group $G_{K,S}$, by \cite[Theorem 2.20 and Remark 2.21(b), Chapter III]{MR559531}, we have the Hochschild--Serre spectral sequence 
	\begin{equation}\label{Hochserre}
			 E_{2}^{r,s}=H^{r}(G_{K,S},H^{s}_{\et}(\operatorname{Spec}(\mathcal{O}_{K_{S},S}),\mathcal{M}))\Rightarrow H_{\et}^{r+s}(\operatorname{Spec}(\mathcal{O}_{K,S}),\mathcal{M}).
	\end{equation}
	
	The edge homomorphisms are the morphisms
	\begin{equation}\label{qiaoliang}
		\phi_{i}:H^{i}(G_{K,S},M)\to H_{\et}^{i}(\operatorname{Spec}(\mathcal{O}_{K,S}),\mathcal{M}).
	\end{equation}
	Note that
	\[
	H^{1}_{\et}(\operatorname{Spec}(\mathcal{O}_{K_{S},S}),\mathcal{M}) = \operatorname{Hom}(\pi_1(\operatorname{Spec}(\mathcal{O}_{K_{S},S}),\bar{\eta}),M) = 0
	\]
   because $\pi_1(\operatorname{Spec}(\mathcal{O}_{K_{S},S}),\bar{\eta})=0$. 
	It follows that the morphism $\phi_{i}$ is an isomorphism for $i\leq 1$, and is injective for $i=2$. 
	
	Similarly, if $M$ is a finite discrete $p$-primary $G_{K,S}(p)$-module, then we have homomorphisms 
		\begin{equation}
		\phi_{i}(p):H^{i}(G_{K,S}(p),M)\to H_{\et}^{i}(\operatorname{Spec}(\mathcal{O}_{K,S}),\mathcal{M})
	\end{equation}
  which factor as follows:
	 \[   \xymatrix{
	 	H^{i}(G_{K,S}(p),M)\ar[d]_{\operatorname{Inf}^{i}} \ar[r]^{\phi_{i}(p)}& H^{i}_{\et}(\operatorname{Spec}(\mathcal{O}_{K,S}),\mathcal{M})\\
	 	H^{i}(G_{K,S},M)\ar[ur]_{\phi_{i}}&
	 }\]
	where $\operatorname{Inf}^{i}:H^{i}(G_{K,S}(p),M)\to H^{i}(G_{K,S},M)$ is the inflation map. Since $M$ is $p$-primary, we have
	\[ H^{1}(\operatorname{Gal}(K_{S}/K_{S}(p)),M)=0, \]
	and hence the map
	\begin{equation}\label{inf2isinjective}
		\operatorname{Inf}^{2}:H^{2}(G_{K,S}(p),M)\to H^{2}(G_{K,S},M)
	\end{equation}
	 is injective. Since $\phi_2$ is injective, it follows that if $\phi_2(p)$ is an isomorphism, then so is $\phi_2$.
	
	For any prime $v$ of $K$, the composite morphism $\mathcal{O}_{K,S}\to K\to K_{v}$ induces natural morphisms $H^{i}_{\et}(\operatorname{Spec}(\mathcal{O}_{K,S}),\mathcal{M})\to H^{i}_{\et}(K_{v},\mathcal{M})$ for all $i\geq 0$. We define
	\[ \Sha^{i}(K,S,\mathcal{M}):=\ker \left( H^{i}_{\et}(\operatorname{Spec}(\mathcal{O}_{K,S}),\mathcal{M})\to \prod_{v\in S}H^{i}_{\et}(K_{v},\mathcal{M})\right),\]
	for any $i\geq 0$.
	
	\subsection{Removing the $h^{2}$-defect in the pro-$p$ extension case}
	In this subsection, we fix a prime number $p$. Let $K$ be a number field and $S$ a finite set of primes of $K$. For the remainder of this subsection, we assume that $p \neq 2$ or that $K$ is totally imaginary. Assume also that $ \mathbb{Z}/p\mathbb{Z}$ is equipped with the trivial action of $G_{K,S}$. We write $\Sha_{p}^{i}(K,S) := \Sha^{i}(K,S,\mathbb{Z}/p\mathbb{Z})$. We also define
  \[ 
  V_{S,p}(K) := \big\{ \alpha \in K^{\times} \mid (\alpha) = I^p \text{ for some fractional ideal } I, \text{ and } \alpha \in K_{v}^{\times p} \text{ for all } v \in S \big\} \big/ K^{\times p},
  \]
  and we set $\Bcyr_{S,p}(K) := V_{S,p}(K)^{\vee}$. It then follows from \cite[Theorem 3.6]{MR2365909} that there is a natural isomorphism
  \[ 
  \Sha_{p}^{2}(K,S) \simeq \Bcyr_{S,p}(K). 
  \]
For simplicity, throughout this subsection we will write $H^{i}(G_{K,S}(p))$ for $H^{i}(G_{K,S}(p),\mathbb{Z}/p\mathbb{Z})$ and $H^{i}_{\et}(\operatorname{Spec}(\mathcal{O}_{K,S}))$ for $H^{i}_{\et}(\operatorname{Spec}(\mathcal{O}_{K,S}),\mathbb{Z}/p\mathbb{Z})$ for any integer $i\geq 0$.

Following \cite[Section 4]{MR2365909}, we introduce the following terminology.

\begin{Def}
	Keeping the above notation, we make the following definitions:
	\begin{enumerate}
		\item We define the $h^{2}$-defect of $S$ with respect to $p$ as
		\[ \delta^{2}_{S,p}(K) := \dim_{\mathbb{F}_{p}}H^{2}_{\et}(\operatorname{Spec}(\mathcal{O}_{K,S})) - \dim_{\mathbb{F}_{p}}H^{2}(G_{K,S}(p)). \]
		\item We denote by $K_{S}^{\mathrm{el}}$ the maximal elementary abelian $p$-extension of $K$ contained in $K_{S}(p)$.
		\item We say that a nonarchimedean prime $v$ of $K$ is tame if $N(v) \equiv 1 \pmod p$, where $N(v)$ denotes the norm of $v$, i.e., the cardinality of the residue field of $v$.
		\item Let $S$ be a set of tame primes of $K$. Let $\zeta_p$ denote a primitive $p$-th root of unity in $\overline{K}$. We define the $p$-density $\Delta_{K}^{p}(S)$ as
		\[
		\Delta_{K}^{p}(S) =\delta_{K(\zeta_p)}(S(K(\zeta_p)))
		\]
		where $S(K(\zeta_p)) $ is the set of primes of $K(\zeta_p)$ lying above $S$ and $\delta_{K(\zeta_p)}$ denotes the Dirichlet density on the level $K(\zeta_p)$.
	\end{enumerate}
\end{Def}

   \begin{Lem}\cite[Lemma 4.4]{MR2365909}\label{descenddefect}
	Let $K$ be a number field and $S$ a finite set of primes of $K$ such that $\Bcyr_{S,p}(K)=0$. Let $v$ be a tame prime of $K$ that does not split completely in $K_{S}^{\mathrm{el}}$. Then
	\[ \delta^{2}_{S\cup \{v\},p}(K)\leq \delta_{S,p}^{2}(K). \]
	Moreover, the natural morphism $H^{2}(G_{K,S\cup \{v\}}(p))\to H^{2}(G_{v})$ is surjective, where $H^{2}(G_{v}):=H^{2}(G_{v},\mathbb{Z}/p\mathbb{Z})$.
\end{Lem}

\begin{Lem}\label{largeentough}
	Let $K$ be a number field, and let $S$ be a set of tame primes of $K$ with $p$-density $\Delta^{p}_{K}(S)=1$. If $S_{1}$ is any finite set of primes of $K$, then the natural morphisms
	\[ 	\phi_{i}(p):H^{i}(G_{K,S\cup S_{1}}(p))\to H_{\et}^{i}(\operatorname{Spec}(\mathcal{O}_{K,S\cup S_{1}}))\]
	are isomorphisms for all $i \ge 0$.
\end{Lem}
 \begin{proof}
 	The assertion follows by applying the same argument as in the proof of \cite[Theorem 4.3]{MR2365909}. 
 \end{proof}
 
  Let $K'/K$ be a finite Galois extension of number fields and $S'$ be a finite set of primes of $K'$. Note that $S'$ is $\operatorname{Gal}(K'/K)$-stable if and only if $ S'$ contains all primes of $K'$ that lie above the primes in $(S')_K$ where $(S')_{K}$ denotes the primes of $K$ below $S'$.

    \begin{Lem}\label{schmidt}
  	Let $K$ be a number field, $S$ a finite set of primes of $K$ and $\mathcal{T}$ a set of primes of $K$ of Dirichlet density zero such that $\mathcal{T}\cap S=\emptyset$. Let $K'/K$ be a finite Galois extension inside $K_{S}$. Let $S_{1}$ be the set of all tame primes of $K$ not in $S\cup \mathcal{T}$, and let $S'_{1}$ be the set of primes of $K'$ lying above $S_{1}$. Let $S'$ be the set of primes of $K'$ lying above $S$. Then the following assertions hold:
  	\begin{enumerate}
  		\item The natural morphisms
  		\[ 	\phi_{i}(p):H^{i}(G_{K',S'\cup S'_{1}}(p))\to H_{\et}^{i}(\operatorname{Spec}(\mathcal{O}_{K',S'\cup S'_{1}}))\]
  		are isomorphisms for all $i$.
  		
  		\item There exists a finite subset $S'_{0}$ of $S'_{1}$ such that $S'_{0}$ is $\operatorname{Gal}(K'/K)$-stable and $\Bcyr_{S'\cup S'_{0},p}(K') = 0$.
  	\end{enumerate}
  \end{Lem}
  \begin{proof}
  	Note that the set of all tame primes of $K$ has $p$-density equal to $1$. Since $S$ is finite and $\mathcal T$ has Dirichlet density zero, we have $	\delta_{K(\zeta_p)}\bigl(S_1(K(\zeta_p))\bigr)=1$. Since $K'(\zeta_p)/K(\zeta_p)$ is a finite extension, we have
  	\[
  	\delta_{K'(\zeta_p)}\bigl(S_1(K'(\zeta_p))\bigr)=1.
  	\]
  	But $S_1(K'(\zeta_p))$ is precisely the set of primes of $K'(\zeta_p)$ lying above
  	primes of $S'_1$. Therefore, by the definition of $p$-density,
  	\[
  	\Delta^p_{K'}(S'_1)
  	=
  	\delta_{K'(\zeta_p)}\bigl(S'_1(K'(\zeta_p))\bigr)
  	=
  	\delta_{K'(\zeta_p)}\bigl(S_1(K'(\zeta_p))\bigr)
  	=
  	1.
  	\]
   Then the first assertion follows from Lemma \ref{largeentough} and the second assertion follows from \cite[Proposition 4.2]{MR2365909}. 
  \end{proof}

  We adapt the proof of \cite[Lemma 4.5]{MR2365909} to establish the following result.
  	
		\begin{Lem}\label{technicallemma}
		Let $K'/K$ be a finite Galois extension of number fields. Let $S'$ be a finite set of primes of $K'$ such that 
		\begin{itemize}
			\item $\Bcyr_{S',p}(K') = 0$,
			\item $K'^{\mathrm{el}}_{S'} \neq K'$,
			\item $ S'$ is $\operatorname{Gal}(K'/K)$-stable.
		\end{itemize}
	   Let $\mathcal{T}$ be a set of primes of $K'$ of Dirichlet density zero such that $\mathcal{T} \cap S' = \emptyset$. Let $\mathfrak{p}$ be a tame prime of $K'$ such that $\mathfrak{p}\notin S'\cup \mathcal{T}$ and $\mathfrak{p}$ splits completely in $K'^{\mathrm{el}}_{S'}/K'$. Then there exists a tame prime
		$\mathfrak p_0$ of $K$ such that, if $S'_{\mathfrak p_0}$ denotes the set of
		primes of $K'$ lying above $\mathfrak p_0$, then
		\begin{enumerate}
			\item $S'_{\mathfrak p_0}\cap (S'\cup \mathcal T)=\emptyset$;
			\item every prime $\mathfrak p'\in S'_{\mathfrak p_0}$ does not split
			completely in $K'^{\mathrm{el}}_{S'}/K'$;
			\item $\mathfrak p$ does not split completely in
			$K'^{\mathrm{el}}_{S'\cup S'_{\mathfrak p_0}}/K'$.
		\end{enumerate}
		In particular,
		\[
		\delta^{2}_{S'\cup \{\mathfrak p\}\cup S'_{\mathfrak p_0},p}(K')
		\leq
		\delta^{2}_{S',p}(K').
		\]
	\end{Lem}
	
	\begin{proof}
		Since $\mathfrak{p}$ splits completely in $K'^{\mathrm{el}}_{S'}/K'$, by class field theory, there exists an $s\in K'^{\times}$ such that
		\begin{enumerate}[label=(\alph*)]\label{liejutiaojian}
			\item $v_{\mathfrak{p}}(s) \equiv 1 \pmod p$,
			\item $v_{\mathfrak{q}}(s) \equiv 0 \pmod p$ for all $\mathfrak{q} \notin S' \cup \{\mathfrak{p}\}$,
			\item $s \in K'^{\times p}_{\mathfrak{q}}$ for all $\mathfrak{q} \in S'$.
		\end{enumerate}
	Since $\Bcyr_{S',p}(K')=0$, $s$ is well-defined as an element in $K'^{\times}/K'^{\times p}$.
		
	Since $v_{\mathfrak{p}}(s) \not\equiv 0 \pmod p$, $s$ is not a global $p$-th power in $K'$, and the extension $K'(\zeta_p, \sqrt[p]{s})/K'(\zeta_p)$ is cyclic of degree $p$. Consider the following two non-trivial extensions over $K'(\zeta_p)$:
		\[
		M_1 = K'^{\mathrm{el}}_{S'}(\zeta_p) \quad \text{and} \quad M_2 = K'(\zeta_p, \sqrt[p]{s}).
		\]
		Since $\mathfrak p$ is tame, we have $\zeta_p\in K'_{\mathfrak p}$. For every prime $\mathfrak P$ of $K'(\zeta_p)$ lying above
		$\mathfrak p$, one has $v_{\mathfrak P}(s)=v_{\mathfrak p}(s)\equiv 1 \pmod p$. By local Kummer theory, the local extension $	K'(\zeta_p)_{\mathfrak P}(\sqrt[p]{s})/
		K'(\zeta_p)_{\mathfrak P}$ is ramified. Hence $M_2/K'(\zeta_p)$ is ramified at $\mathfrak{P}$. Since $\mathfrak{p} \notin S'$, the extension $M_1/K'(\zeta_p)$ is unramified at $\mathfrak{P}$. It follows that $M_1$ and $M_2$ are linearly disjoint over $K'(\zeta_p)$. Thus, their compositum $M_1 M_2$ is an abelian extension over $K'(\zeta_p)$ with Galois group $\operatorname{Gal}(M_1 M_2/K'(\zeta_p)) \cong \operatorname{Gal}(M_1/K'(\zeta_p)) \times \operatorname{Gal}(M_2/K'(\zeta_p))$.
	
\begin{tikzcd}[row sep=1.5em, column sep=1.5em]
	& & K'^{\mathrm{el}}_{S'} \arrow[drr, dash] & & & & \\
	& & & & M_1 = K'^{\mathrm{el}}_{S'}(\zeta_p) \arrow[dr, dash] & & \\
	K \arrow[r, dash] & K' \arrow[uur, dash] \arrow[rr, dash] & & K'(\zeta_p) \arrow[ur, dash] \arrow[dr, dash] & & M_1M_2 \arrow[r, dash] & N \\
	& & & & M_2 = K'(\zeta_p, \sqrt[p]{s}) \arrow[ur, dash] & &
\end{tikzcd}
		
	 Let $N$ be the Galois closure of $M_1 M_2$ over $K$. Since $M_1/K'(\zeta_p)$ and $M_2/K'(\zeta_p)$ are non-trivial, we can choose an element $\sigma_0 \in \operatorname{Gal}(N/K'(\zeta_{p}))$ such that
		\[
		\sigma_0\big|_{M_1} \neq \text{id} \quad \text{and} \quad \sigma_0\big|_{M_2} \neq \text{id}.
		\]
		Let $C \subset \operatorname{Gal}(N/K)$ be the conjugacy class of $\sigma_0$. Since the extension $K'(\zeta_p)/K$ is Galois, we have $C\subset \operatorname{Gal}(N/K'(\zeta_{p}))$. Since $S'$ is $\operatorname{Gal}(K'/K)$-stable, the extension $K'^{\mathrm{el}}_{S'}/K$ is Galois, and so is the extension $M_{1}/K$. Thus, every $\sigma \in C$ fixes $K'(\zeta_p)$ and acts non-trivially on $M_1$. By the Chebotarev density theorem applied to the Galois extension $N/K$, there exist infinitely many primes $\mathfrak{p}_0$ of $K$ (unramified in $N$) with positive Dirichlet density whose associated Frobenius conjugacy class is precisely $C$, i.e., $\text{Frob}_{N/K}(\mathfrak{p}_0) = C$. We choose one such $\mathfrak{p}_0$ outside the set $(S'\cup \mathcal{T})_{K}$, which is permissible since $\mathcal{T}$ has Dirichlet density zero and $S'$ is finite. Since elements in $C$ fix $\zeta_p$, the prime $\mathfrak{p}_0$ splits completely in $K(\zeta_p)/K$, forcing $N(\mathfrak{p}_0) \equiv 1 \pmod p$.
		
		Now, let $\mathfrak{p}'$ be any prime of $K'$ lying over $\mathfrak{p}_0$. The Frobenius conjugacy class $\text{Frob}_{N/K'}(\mathfrak{p}')$ is contained in $C$. By our construction, every element in $C$ restricts non-trivially to $M_1 = K'^{\text{el}}_{S'}(\zeta_p)$. Thus, $\text{Frob}_{K'^{\mathrm{el}}_{S'}/K'}(\mathfrak{p}') \neq \text{id}$ and hence $\mathfrak{p}'$ does not split completely in $K'^{\text{el}}_{S'}/K'$.
		
		Moreover, by choosing a prime $\mathfrak{P}$ of $N$ above $\mathfrak p_0$ whose Frobenius
		element is $\sigma_0$, we obtain a prime $\mathfrak{p}'_{0}=\mathfrak{P}\cap K'\in S'_{\mathfrak p_0}$. Let $\mathfrak{P}_0 := \mathfrak{P} \cap K'(\zeta_p)$. Since the Frobenius element of $\mathfrak{P}$ in $\operatorname{Gal}(N/K'(\zeta_p))$ is exactly $\sigma_0$, and $\sigma_0$ acts non-trivially on $M_2$, the prime $\mathfrak{P}_0$ does not split completely in $M_2/K'(\zeta_p)$. By local Kummer theory, this is equivalent to $s\notin K'(\zeta_{p})_{\mathfrak P_0}^{\times p}=K'^{\times p}_{\mathfrak p'_0}$.
		
	  Now suppose for contradiction that $\mathfrak{p}$ splits completely in $K'^{\mathrm{el}}_{S'\cup \{\mathfrak{p}'_{0}\}}/K'$. Then there exists an element $t \in K'^{\times}$ satisfying conditions (a)–(c) with respect to the enlarged set $S' \cup \{\mathfrak{p}'_0\}$, which in particular implies $t \in K'^{\times p}_{\mathfrak{p}'_{0}}$. Since $s/t\in \Bcyr_{S',p}(K')=0$, we have $s/t\in K'^{\times p}$ which implies that $s\in K'^{\times p}_{\mathfrak{p}'_{0}}$. This contradicts our earlier deduction that $s \notin K'^{\times p}_{\mathfrak{p}'_{0}}$. Thus, $\mathfrak{p}$ cannot split completely in $K'^{\mathrm{el}}_{S' \cup \{\mathfrak{p}'_{0}\}}/K'$. It immediately follows that $\mathfrak{p}$ does not split completely in $K'^{\mathrm{el}}_{S' \cup S'_{\mathfrak{p}_0}}/K'$.
	  
	  Finally, by applying Lemma \ref{descenddefect}, we conclude that $\delta^{2}_{S' \cup \{\mathfrak{p}\} \cup S'_{\mathfrak{p}_{0}},p}(K') \leq \delta^{2}_{S',p}(K')$.
	\end{proof}
	
	Furthermore, we adapt the proof of \cite[Lemma 4.6]{MR2365909} to establish the following result.
	
			\begin{Thm}\label{control}
		Let $K'/K$ be a finite Galois extension of number fields. Let $S'$ be a $\operatorname{Gal}(K'/K)$-stable finite set of primes of $K'$ and $\mathcal{T}$ be a set of primes of $K'$ of Dirichlet density zero, disjoint from $S'$. Suppose that $M$ is a finite discrete $p$-primary $G_{K',S'}(p)$-module with trivial action. Then there exists a finite set $S_{0}$ of primes of $K'$ satisfying the following:
		\begin{enumerate}
			\item $S_{0} \cap (S'\cup \mathcal{T})= \emptyset$;
			\item $(S_{0}\cup S')_{\mathrm{fin}}\neq \emptyset$;
			\item if $M$ is viewed as a $G_{K',S'\cup S_{0}}$-module, then the natural morphism
			\[ \phi_{2}:H^{2}(G_{K',S'\cup S_{0}},M)\to H^{2}_{\et}(\operatorname{Spec}(\mathcal{O}_{K',S'\cup S_{0}}),\mathcal{M}) \]
			is an isomorphism;
			\item $S_0$ is $\operatorname{Gal}(K'/K)$-stable.
		\end{enumerate}
	\end{Thm}
	\begin{proof}
	Since $M$ is $p$-primary and the $G_{K',S'}(p)$-action on it is trivial, we may assume without loss of generality that $M = \mathbb{Z}/p\mathbb{Z}$. Condition (3) then reduces to showing that the natural morphism
			\[ \phi_{2}(p):H^{2}(G_{K',S'\cup S_{0}}(p))\to H^{2}_{\et}(\operatorname{Spec}(\mathcal{O}_{K',S'\cup S_{0}})) \]
			is an isomorphism.
			
    Let $S_{1}$ denote the set of all tame primes of $K$ not in $(S'\cup \mathcal{T})_{K}$, and let $S'_{1}$ denote the set of primes of $K'$ lying above $S_{1}$. By Lemma \ref{schmidt}, the morphism
    \[ 	\phi_{2}(p):H^{2}(G_{K',S'\cup S'_{1}}(p))\to H_{\et}^{2}(\operatorname{Spec}(\mathcal{O}_{K',S'\cup S'_{1}}))\]
    is an isomorphism, and there exists a $\operatorname{Gal}(K'/K)$-stable finite subset $S'_{0}$ of $S'_{1}$  such that $\Bcyr_{S'\cup S'_{0},p}(K') = 0$. 
    
    After replacing $S'$ by $S'\cup S'_{0}$, we may assume that $\Bcyr_{S',p}(K') = 0$. Then $\Sha^{2}_{p}(K',S')=0$. Thus, for any $T\supset S'$, the natural morphisms $H^{2}(G_{K',S'}(p))\to H^{2}(G_{K',T}(p)) $ and $H^{2}_{\et}(\operatorname{Spec}(\mathcal{O}_{K',S'}))\to H^{2}_{\et}(\operatorname{Spec}(\mathcal{O}_{K',T}))$ are injective. If $\delta^{2}_{S',p}(K')=0$, then we are done. Otherwise, we consider the following commutative diagram of $\mathbb{F}_{p}$-vector spaces
	\[
	\begin{tikzcd}[column sep=large, row sep=large]
		H^2(G_{K',S'}(p)) \arrow[r, hook] \arrow[d, hook] & 
		H^2(G_{K',S'\cup S'_1}(p)) \arrow[d, "\cong"] \\
		H^2_{\text{ét}}(\text{Spec}(\mathcal{O}_{K',S'})) \arrow[r, hook] & 
		H^2_{\text{ét}}(\text{Spec}(\mathcal{O}_{K',S'\cup S'_1})).
	\end{tikzcd}
	\]
	
	 Let $x\in	H^{2}_{\et}(\operatorname{Spec}(\mathcal{O}_{K',S'})) $ but $x\notin 	H^{2}(G_{K',S'}(p))$. Since $S'_{1}$ is $\operatorname{Gal}(K'/K)$-stable, there exists a finite subset $S'_{2}$ of $S'_{1}$ such that $x\in H^{2}(G_{K',S'\cup S'_{2}}(p))$ and $S'_{2}$ is $\operatorname{Gal}(K'/K)$-stable. Let $S'_{3}=\{\mathfrak{p}_{1},\dots,\mathfrak{p}_{n}\} \subset S'_{2}$ be the subset consisting of the primes that split completely in $K'^{\mathrm{el}}_{S'}$. We choose $\mathfrak{p}'_{1},\cdots,\mathfrak{p}'_{n}\in (S'_{1})_{K}$ according to Lemma \ref{technicallemma} and let $S'_{4}$ be the finite set of primes of $K'$ above the set $\{\mathfrak{p}'_{1},\cdots,\mathfrak{p}'_{n}\}$. By Lemma \ref{descenddefect}, the natural morphism
	 \[ \varphi_{1}:H^{2}(G_{K',S'\cup S'_{2}\cup S'_{4}}(p))\to \prod_{\mathfrak{p}\in S'_{2}\cup S'_{4}} H^{2}(G_{K'_{\mathfrak{p}}}) \]
	 is surjective.
	 
	Now, we consider the following commutative diagram of $\mathbb{F}_{p}$-vector spaces with exact rows:
\[
\begin{tikzcd}[row sep=large, column sep=small]
	0 \arrow[r] & H^2(G_{K',S'}(p)) \arrow[d, "\alpha", hook] \arrow[r, "f"] & 
	H^2(G_{K',S' \cup S'_2 \cup S'_4}(p)) \arrow[d, "\beta", hook] \arrow[dr, "\varphi_1"', two heads] \arrow[r] & 
	\operatorname{coker} f \arrow[d, "\varphi_2", two heads] \arrow[r] & 0 \\
	0 \arrow[r] & H^2_{\et}(\operatorname{Spec}(\mathcal{O}_{K', S'})) \arrow[r] & 
	H^2_{\et}(\operatorname{Spec}(\mathcal{O}_{K', S' \cup S'_2 \cup S'_4})) \arrow[r] & 
	\displaystyle\prod_{\mathfrak{p} \in S'_2 \cup S'_4} H^2(G_{K'_{\mathfrak{p}}}) \arrow[r] & 0.
\end{tikzcd}
\]
By the snake lemma, we have an exact sequence of $\mathbb{F}_{p}$-vector spaces
\[ \ker(\varphi_{2})\to \operatorname{coker}(\alpha)\to \operatorname{coker}(\beta)\to \operatorname{coker}(\varphi_{2}), \]
where $\operatorname{coker}(\varphi_{2})=0$. Since $x\in \ker(\varphi_{1})$ and $x\notin H^{2}(G_{K',S'}(p))$, we deduce that $\ker(\varphi_{2})\neq 0$. It follows that $\delta^{2}_{S'\cup S'_{2}\cup S'_{4},p}(K')<\delta^{2}_{S',p}(K')$. By iterating this process, we eventually obtain a finite set $S_{0}$ of primes that has a trivial $h^{2}$-defect and satisfies the required conditions.
\end{proof}

   \subsection{A key theorem}
   In this subsection, we aim to prove the following key theorem, which plays a crucial role in the proof of the third part of Theorem \ref{main}.

	\begin{Thm}\label{main2}
		Let $K$ be a number field and let $p$ be a prime number. Assume that $p\neq 2$ or that $K$ is totally imaginary. Let $S$ be a finite set of primes of $K$ and $\mathcal{T}$ a set of primes of Dirichlet density zero such that $\mathcal{T}\cap S=\emptyset$. 
		Suppose that $M$ is a finite discrete $p$-primary $G_{K,S}$-module.
		Then there exists a finite set $S_{0}$ of primes of $K$ satisfying the following:
		\begin{enumerate}
			\item $S_{0}$ is disjoint from $S\cup \mathcal{T}$;
			\item $(S_{0}\cup S)_{\mathrm{fin}}\neq \emptyset$;
			\item if $M$ is viewed as a $G_{K,S\cup S_{0}}$-module, then the natural morphisms
			\[  \phi_{i}:H^{i}(G_{K,S\cup S_{0}},M)\to H^{i}_{\et}(\operatorname{Spec}(\mathcal{O}_{K,S\cup S_{0}}),\mathcal{M}) \]
			are isomorphisms for all $i\leq 2$. 
		\end{enumerate}
	\end{Thm}
	
	For this purpose, we need the following algebraic lemma.
	
		\begin{Lem}\label{homologicalfact}
		Let $F: E^{r,s}_{m} \to \mathcal{E}_{m}^{r,s}$ be a morphism of first quadrant cohomological spectral sequences in the category of abelian groups. Suppose that $F$ is an isomorphism for all bigrades $(r,s) $ with $s \leq 2$ at the $E_2$ page. Then $F$ induces an isomorphism on $H^2$.
	\end{Lem}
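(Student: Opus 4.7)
The plan is to carry out the standard five-lemma chase on the filtration of $H^2$ induced by the spectral sequence, after first verifying that $F$ induces isomorphisms on each of the three $E_\infty$-terms contributing to $H^2$.

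First I would record that since the spectral sequence is first quadrant, the abutment $H^2$ carries a finite decreasing filtration $H^2 = F^0 H^2 \supset F^1 H^2 \supset F^2 H^2 \supset F^3 H^2 = 0$, whose successive quotients are $E_\infty^{0,2}$, $E_\infty^{1,1}$, $E_\infty^{2,0}$, and similarly for $\mathcal{E}$. As a morphism of spectral sequences, $F$ respects these filtrations and induces morphisms between the associated graded pieces that agree with $F$ on the $E_\infty$-page.

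Next I would compute each $E_\infty^{r,s}$ with $r+s=2$ as an explicit subquotient of $E_2$-terms, by inspecting which incoming and outgoing differentials survive in the first quadrant. Concretely, $E_\infty^{2,0}$ is the cokernel of $d_2 : E_2^{0,1} \to E_2^{2,0}$; $E_\infty^{1,1}$ is the kernel of $d_2 : E_2^{1,1} \to E_2^{3,0}$; and $E_\infty^{0,2} = \ker\bigl(d_3 : E_3^{0,2} \to E_3^{3,0}\bigr)$, where $E_3^{0,2} = \ker(d_2 : E_2^{0,2} \to E_2^{2,1})$ and $E_3^{3,0} = \operatorname{coker}(d_2 : E_2^{1,1} \to E_2^{3,0})$; all higher-page differentials at these bigrades have source or target in the forbidden region $s<0$ or $r<0$. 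The bigrades appearing in these formulae, namely $(0,1)$, $(1,1)$, $(2,0)$, $(2,1)$, $(3,0)$, and $(0,2)$, all satisfy $s \leq 2$, so $F$ is an isomorphism on each of them at $E_2$ by hypothesis. Repeated application of the five lemma to the short exact sequences defining the kernels and cokernels above then shows that $F$ induces an isomorphism $E_\infty^{r,s}(E) \to E_\infty^{r,s}(\mathcal{E})$ for every bigrade with $r+s = 2$.

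Finally I would conclude by an inductive five-lemma argument on the filtration: using $F^2 H^2 = E_\infty^{2,0}$, the isomorphisms on $E_\infty^{2,0}$ and $E_\infty^{1,1}$ combined with the short exact sequence $0 \to F^2 H^2 \to F^1 H^2 \to E_\infty^{1,1} \to 0$ give that $F$ is an isomorphism on $F^1 H^2$, and then the sequence $0 \to F^1 H^2 \to H^2 \to E_\infty^{0,2} \to 0$ yields the conclusion on $H^2$. There is no serious obstacle here; the only step requiring attention is the bookkeeping in identifying exactly which $E_2$-bigrades enter the description of $E_\infty^{r,s}$ for $r+s=2$ and checking that all of them satisfy $s\le 2$, which is precisely the reason the hypothesis is formulated in this range.
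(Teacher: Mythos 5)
Your proposal is correct and follows essentially the same route as the paper's own proof: identify the three filtration quotients of $H^2$ as $E_\infty^{0,2}$, $E_\infty^{1,1}$, $E_\infty^{2,0}$, compute each as a kernel or cokernel of $d_2$ (and $d_3$ for the $(0,2)$ term, with target $E_3^{3,0}=\operatorname{coker}(d_2\colon E_2^{1,1}\to E_2^{3,0})$) involving only bigrades with $s\le 2$, and conclude by the five lemma on the filtration. No gaps.
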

	\begin{proof}
		Suppose we have two first quadrant cohomological spectral sequences, $E_m^{r,s}$ and $\mathcal{E}_m^{r,s}$, converging to filtered graded objects $H^*$ and $\mathcal{H}^*$ respectively. We are given a morphism of spectral sequences $F_{m}: E^{r,s}_{m} \to \mathcal{E}_{m}^{r,s}$ such that the map on the second page, $F_2: E_2^{r,s} \to \mathcal{E}_2^{r,s}$, is an isomorphism for all $r \ge 0$ and for $s \in \{0, 1, 2\}$. The group $H^2$ has a finite filtration:
		\[ 0 = F^{3}H^{2} \subseteq F^{2}H^{2} \subseteq F^{1}H^{2} \subseteq F^{0}H^{2} = H^2. \]
		The associated graded pieces are the terms on the $E_\infty$-page that lie on the diagonal $r+s=2$:
		\[ E_\infty^{2,0} \cong F^{2}H^{2}/F^{3}H^{2},
		E_\infty^{1,1} \cong F^{1}H^{2}/F^{2}H^{2},
		E_\infty^{0,2} \cong F^{0}H^{2}/F^{1}H^{2}. \]
		To show that the induced map $F_*: H^2 \to \mathcal{H}^2$ is an isomorphism, by the five lemma, it suffices to show that $F$ induces an isomorphism on each of these associated graded pieces, i.e., that $F: E_\infty^{r,s} \to \mathcal{E}_\infty^{r,s}$ is an isomorphism for $(r,s) = (2,0), (1,1),$ and $(0,2)$.
		\begin{itemize}
			\item  The $ (2,0)$ term. The term $E_\infty^{2,0}$ is computed from the $E_2$ page. The only relevant differential is $d_2: E_2^{0,1} \to E_2^{2,0}$, and we have
			$E_3^{2,0} =\operatorname{coker}(d_2: E_2^{0,1} \to E_2^{2,0})$.
			Then we have the following commutative diagram.
			\[ \xymatrix{
				E_2^{0,1} \ar_{F_{2}}[d] \ar[r]^{d_{2}} &E_2^{2,0}\ar[d]_{F_{2}}\\
				\mathcal{E}_2^{0,1}  \ar[r]^{d'_{2}} &\mathcal{E}_2^{2,0}} \]
			By assumption, the vertical maps are isomorphisms which implies that the induced map on the cokernels, $F_{3}: E_3^{2,0} \to \mathcal{E}_3^{2,0}$, is also an isomorphism.
			Since the later differentials are zero, $E_\infty^{2,0} = E_3^{2,0}$. Thus, $F_{\infty}: E_\infty^{2,0} \to \mathcal{E}_\infty^{2,0}$ is an isomorphism.
			\item The $(1,1)$ term. In this case, the only relevant differential is $d_2: E_2^{1,1} \to E_2^{3,0}$, and we have $E_3^{1,1} = \ker(d_2: E_2^{1,1} \to E_2^{3,0})$. Since $F_{2}$ commutes with all differentials and $F_2$ is an isomorphism on the source and the target, the induced map on the kernels, $F_3: E_3^{1,1} \to \mathcal{E}_3^{1,1}$, is an isomorphism. Since later differentials are zero, $E_\infty^{1,1} = E_3^{1,1}$. Thus, $F_{\infty}: E_\infty^{1,1} \to \mathcal{E}_\infty^{1,1}$ is an isomorphism.
			\item The $ (0,2)$ term.
			This term can be affected by two differentials, $d_2$ and $d_3$.
			First, we have $E_3^{0,2} = \ker(d_2: E_2^{0,2} \to E_2^{2,1})$. Since $F_2$ is an isomorphism on the source and target, the induced map $F_3: E_3^{0,2} \to \mathcal{E}_3^{0,2}$ is an isomorphism.
			Next, we have $E_4^{0,2} = \ker(d_3: E_3^{0,2} \to E_3^{3,0})$. We just showed $F_3$ is an isomorphism on the source $E_3^{0,2}$. We also need to show it is an isomorphism on the target, $E_3^{3,0}$.
			The term $E_3^{3,0}$ is the cokernel of $d_2: E_2^{1,1} \to E_2^{3,0}$. Since $F_2$ is an isomorphism on both of these terms, $F_3: E_3^{3,0} \to \mathcal{E}_3^{3,0}$ is an isomorphism.
			Now, for the $d_3$ differential, $F_3$ is an isomorphism on both the source and target. This implies the induced map on the kernels, $F_4: E_4^{0,2} \to \mathcal{E}_4^{0,2}$, is an isomorphism.
			Since later differentials are zero, we see that $E_\infty^{0,2} = E_4^{0,2}$. Thus, $F_\infty: E_\infty^{0,2} \to \mathcal{E}_\infty^{0,2}$ is an isomorphism.
		\end{itemize}
		This completes the proof of our lemma.
	\end{proof}

	\begin{proof}[Proof of Theorem \ref{main2}]
		Let $M$ be a finite discrete $p$-primary $G_{K,S}$-module. Then there exists an open normal subgroup $U=G_{K',S'}$ of $G_{K,S}$ such that $U$ acts trivially on $M$ and $ S'$ contains all primes of $K'$ that lie above the primes in $(S')_K$. By Theorem \ref{control}, there exists a finite set $S_{0}$ of primes of $K'$ such that 
		\begin{itemize}
			\item $S_{0} \cap (S'\cup \mathcal{T})= \emptyset$;
			\item $(S_{0}\cup S')_{\mathrm{fin}}\neq \emptyset$;
			\item the natural morphisms $\phi_{i}:H^{i}(G_{K',S'\cup S_{0}},M)\to H^{i}_{\et}(\operatorname{Spec}(\mathcal{O}_{K',S'\cup S_{0}}),\mathcal{M})$ are isomorphisms for all $i\leq 2$;
			\item $ S_{0}$ contains all primes of $K'$ that lie above the primes in $(S_0)_K$.
		\end{itemize}
		
		It follows that $G_{K',S'\cup S_0}$ is an open normal subgroup of $G_{K,S\cup (S_{0})_{K}}$, and hence we have the Hochschild--Serre spectral sequence
		\[  E^{r,s}_{2}=H^{r}(\operatorname{Gal}(K'/K),H^{s}(G_{K',S'\cup S_{0}},M))\Rightarrow H^{r+s}(G_{K,S\cup (S_{0})_{K}},M),\]
		where the group $G_{K',S'\cup S_{0}}$ acts trivially on $M$. On the other hand, since 
		\[ \operatorname{Spec}(\mathcal{O}_{K',S'\cup S_{0}})\to \operatorname{Spec}(\mathcal{O}_{K,S\cup (S_{0})_{K}}) \]
		is a Galois covering with Galois group $\operatorname{Gal}(K'/K)$, by \cite[Theorem 2.20 and Remark 2.21(b), Chapter III]{MR559531} we also have the Hochschild--Serre spectral sequence
		\begin{align*}
			\mathcal{E}_{2}^{r,s}=H^{r}(\operatorname{Gal}(K'/K),H^{s}_{\et}(\operatorname{Spec}&(\mathcal{O}_{K',S'\cup S_{0}}),\mathcal{M})) \\
			\Rightarrow &  H_{\et}^{r+s}(\operatorname{Spec}(\mathcal{O}_{K,S\cup (S_{0})_{K}}),\mathcal{M}).
		\end{align*}
		Moreover, the natural homomorphism (\ref{qiaoliang}) defines a morphism 
		\[ F:E^{r,s}\to \mathcal{E}^{r,s} \]
		of spectral sequences. Since $\phi_{i}$ are isomorphisms for all $i\leq 2$, $F$ is an isomorphism for all bigrades $(r,s) $ with $s \leq 2$ at the $E_2$ page. Finally, it follows from Lemma \ref{homologicalfact} that
		\[ H^{2}(G_{K,S\cup (S_{0})_{K}},M)\cong H^{2}_{\et}(\operatorname{Spec}(\mathcal{O}_{K,S\cup (S_{0})_{K}}),\mathcal{M}). \]
	   In addition, the maps 
	   \[ \phi_{i}:H^{i}(G_{K,S\cup (S_{0})_{K}},M)\to H^{i}_{\et}(\operatorname{Spec}(\mathcal{O}_{K,S\cup (S_{0})_{K}}),\mathcal{M}) \]
	   are always isomorphisms for $i=0,1$. This completes the proof of our theorem.
	\end{proof}

\subsection{Proof of Theorem \ref{main}}
Finally, we prove Theorem \ref{main} as follows. For parts (1) and (2), since the morphisms $\phi_{i}$ in \eqref{qiaoliang} are isomorphisms for $i=0, 1$, and injective for $i=2$, we obtain the inequality
	\[
	\chi_{2}(G_{K,S},M) \leq \chi_{2}(\operatorname{Spec}(\mathcal{O}_{K,S}),\mathcal{M}),
	\]
	where equality holds if and only if $\phi_{2}$ is an isomorphism. The claims then follow directly from Theorem \ref{calculationofetale}.
	
	Part (3) is an immediate consequence of Theorem \ref{main2} and Theorem \ref{calculationofetale}.

  \section{Proof of Theorem \ref{presentation}}\label{proofofsecondtheorems}
  In this section, our goal is to prove Theorem \ref{presentation}. To this end, we shall apply the results of Lubotzky \cite{MR1848964}. We begin by recalling the following definition. 
  
  \begin{Def}\cite[Definition 4.2]{MR1848964}
  	We say that a profinite group $G$ is a $d(G)$-abelian-indexed group if it is topologically finitely generated and, for every finite index subgroup $H$ of $G$, the abelianization of $H$ is isomorphic to $\widehat{\mathbb{Z}}^{m}$ with $m=1+(d(G)-1)[G:H]$, where $d(G)$ denotes the minimal number of topological generators of $G$.
  \end{Def}
  
 For any prime number $p$ and any finite $\mathbb{F}_{p}[[G]]$-module $M$, we define
   \[    	
 v(G,p,M) := \left\lceil \dfrac{\dim H^{2}(G,M) - \dim H^{1}(G,M)+\dim H^{0}(G,M)}{\dim M} \right\rceil,
 \] 
  where $\lceil x \rceil$ denotes the ceiling function, and $\mathbb{F}_{p}[[G]]$ is the completed group algebra of $G$ over $\mathbb{F}_p$. In \cite[Theorem 0.2]{MR1848964}, Lubotzky proved the following theorem.
  
  \begin{Thm}\label{Lubotzky}
  	Let $G$ be a topologically finitely generated profinite group. If $G$ is not $d(G)$-abelian-indexed, then
  	\[  r(G)-d(G)=\max_{p,M}v(G,p,M) -1,\]
    where the maximum is taken over all prime numbers $p$ and all finite simple $\mathbb{F}_{p}[[G]]$-modules $M$.
  \end{Thm}
  
  To apply this theorem to our group $G = G_{K,S}$, we need to establish the following lemma.
  
  \begin{Lem}
  For any number field $K$ and any finite set $S$ of primes of $K$, the profinite group $G_{K,S}$ is not $d(G_{K,S})$-abelian-indexed.
  \end{Lem}
  \begin{proof}
  	We may assume that $G_{K,S}$ is topologically finitely generated. Choose a prime number $p$ such that $S \cap S_{p} = \emptyset$. By class field theory, the group $G_{K,S}^{\mathrm{ab}}(p)$ is finite. On the other hand, if $G_{K,S}$ were a $d(G_{K,S})$-abelian-indexed group, then it would imply that $G_{K,S}^{\mathrm{ab}}(p) \cong \mathbb{Z}_{p}^{d(G_{K,S})}$, which contradicts the finiteness of $G_{K,S}^{\mathrm{ab}}(p)$. This completes the proof.
  \end{proof}
  
  For any finite $\mathbb{F}_{p}[[G_{K,S}]]$-module $M$, we define
  \[ \dim \chi_{2}(G_{K,S},M):= \log_{p} \chi_{2}(G_{K,S},M). \]
   As a corollary, we immediately obtain
  \begin{equation}\label{formula}
   r(G_{K,S})-d(G_{K,S})=\max_{p,M} \left\lceil \dfrac{\dim \chi_{2}(G_{K,S},M)}{\dim M} \right\rceil -1.
  \end{equation}
 
 Let $\mu_p$ denote the $G_K$-module of $p$-th roots of unity in $\overline{K}^{\times}$, and let $M'$ be the Cartier dual of $M$. We need the following result.
 
 \begin{Lem}\label{observation}
 	Let $M$ be a finite simple $\mathbb{F}_p[[G_K]]$-module. Then $\dim H^0(G_K, M') = 1$ if $M \cong \mu_p$ as $\mathbb{F}_p[[G_K]]$-modules, and is $0$ otherwise.
 \end{Lem}
 
 \begin{proof}
 	By the definition of the Cartier dual, there is a natural isomorphism of $\mathbb{F}_p$-vector spaces
 	\[
 	H^0(G_K, M') \cong \operatorname{Hom}_{\mathbb{F}_p[[G_K]]}(M, \mu_p).
 	\]
 	Since $\mu_p$ and $M$ are both simple, Schur's lemma implies that $\operatorname{Hom}_{\mathbb{F}_p[[G_K]]}(M, \mu_p)$ is $1$-dimensional over $\mathbb{F}_p$ if $M \cong \mu_p$, and vanishes otherwise.
 \end{proof}

Now, let $M$ be a finite simple $\mathbb{F}_p[[G_{K,S}]]$-module. We evaluate the upper bounds by distinguishing two cases. If either $S_{\mathrm{fin}} \neq \emptyset$ or $M \not\cong \mu_p$, then Theorem \ref{main}, Lemma \ref{observation}, and the product formula \cite[Proposition 1.3, Chapter III]{MR1697859} yield
\begin{align*}
	\dfrac{\dim \chi_{2}(G_{K,S},M)}{\dim M} &\leq \dfrac{\sum_{v\in S_{\infty}}\dim H^{0}(G_{v},M)-[K:\mathbb{Q}]\dim M+ \sum_{v \in S_p \setminus S} [K_v:\mathbb{Q}_p]\dim M}{\dim M} \\
	&\leq \dfrac{\sum_{v\in S_{\infty}}\dim H^{0}(G_{v},M)-[K:\mathbb{Q}]\dim M+ \sum_{v \in S_p } [K_v:\mathbb{Q}_p]\dim M}{\dim M} \\
	&= \dfrac{\sum_{v\in S_{\infty}}\dim H^{0}(G_{v},M)}{\dim M} \\
	&\leq \mathcal{R}(K).
\end{align*}

In the remaining case where $S_{\mathrm{fin}} = \emptyset$ and $M \cong \mu_p$, we obtain the bound
\[ 	\dfrac{\dim \chi_{2}(G_{K,S},M)}{\dim M}\leq \sum_{v\in S_{\infty}}\dim H^{0}(G_{v},\mu_{p})+1\leq \mathcal{R}(K)+1.
 \]
Substituting these bounds into \eqref{formula}, we conclude that
\[ r(G_{K,S})-d(G_{K,S})\leq \mathcal{R}(K)-\gamma. \]

   Furthermore, by \cite[Corollary 2.5]{MR1848964}, the profinite group $G_{K,S}$ admits a finite presentation on $d(G_{K,S})$ generators and $\mathcal{R}(K)+d(G_{K,S})-\gamma$ relations. 
   
   To see that $r(G_{K,S})-d(G_{K,S})\geq 0$, we choose a prime number $p$ such that $S\cap S_{p}=\emptyset$ and consider the trivial $G_{K,S}$-module $\mathbb{F}_{p}$. Since the abelianization of $G_{K,S}(p)$ is finite by class field theory, we have
   \[ \dim H^{2}(G_{K,S}(p),\mathbb{F}_{p})-\dim H^{1}(G_{K,S}(p),\mathbb{F}_{p})\geq 0. \]
 
   Since 
   \begin{align*}
   	\dim H^{1}(G_{K,S},\mathbb{F}_{p})&=\dim  H^{1}(G_{K,S}(p),\mathbb{F}_{p}),\\
   	\dim H^{2}(G_{K,S},\mathbb{F}_{p})&\geq \dim H^{2}(G_{K,S}(p),\mathbb{F}_{p}),
   \end{align*}
  we have
     \begin{align*}
   r(G_{K,S})-d(G_{K,S})	\geq &   \dim H^{2}(G_{K,S}(p),\mathbb{F}_{p})-\dim H^{1}(G_{K,S}(p),\mathbb{F}_{p})\\
  	\geq & 0,
  \end{align*}
   which proves the first assertion.
   
   To prove the second assertion, we choose an odd prime number $q$ such that $S_q \cap S = \emptyset$, and consider the trivial $G_{K,S}$-module $\mathbb{F}_q$. By Theorem \ref{main}, there exists a finite set $S_{0}$ of primes of $K$ such that $S_{0}\cap S_{q}=\emptyset,(S_{0}\cup S)_{\mathrm{fin}}\neq \emptyset$, and 
    \[ 	\dfrac{\dim \chi_{2}(G_{K,S\cup S_{0}},\mathbb{F}_{q})}{\dim \mathbb{F}_{q}}=\sum_{v\in S_{\infty}}\dim H^{0}(G_{v},\mathbb{F}_{q})= \mathcal{R}(K).\]
    By the first assertion and (\ref{formula}), we have
    \[  r(G_{K,S\cup S_{0}})-d(G_{K,S\cup S_{0}})=\mathcal{R}(K)-1. \]
 This establishes the second assertion, completing the proof of the theorem.
    
\section{An example}\label{examples}
If $K$ is a number field, $S$ is a finite set of primes containing all archimedean primes and $M$ is a finite discrete $G_{K,S}$-module, then we will denote by $\widetilde{\chi}_{2}(G_{K,S},M)$ the right hand side of (\ref{euler}). In this section, we provide an example where
\[ \chi_{2}(G_{K,S},M) < \widetilde{\chi}_{2}(G_{K,S},M), \]
which also illustrates the strict inequality
\[r(G_{K,S}) - d(G_{K,S}) < \mathcal{R}(K) - \gamma.  \]

 \begin{example}\label{anexample}
 Let $K=\mathbb{Q}(\sqrt{-120})$. By \cite{MR1617407}, $G_{K,S_{\infty}}$ is isomorphic to the quaternion group $Q_{8}$ of order $8$. For any prime number $p$, consider $\mathbb{F}_{p}$ as a trivial $G_{K,S_{\infty}} $-module. Then we have
 \[ 	\chi_{2}(G_{K,S_{\infty}},\mathbb{F}_{p}) =\dfrac{[H^{0}(Q_{8},\mathbb{F}_{p})]\cdot [H^{2}(Q_{8},\mathbb{F}_{p})]}{[H^{1}(Q_{8},\mathbb{F}_{p})]}=p. \]
 Since the field $K = \mathbb{Q}(\sqrt{-120})$ contains a primitive $p$-th root of unity if and only if $p=2$, we have
 	\begin{equation*}
 	\widetilde{\chi}_{2}(G_{K,S_{\infty}},\mathbb{F}_{p})=\begin{cases}
 		p,&\text{if } p\geq 3,\\
 		p^2,&\text{if } p=2.
 	\end{cases}
 \end{equation*}
 Thus, taking $p=2$ gives the desired strict inequality
 \[ \chi_{2}(G_{K,S_{\infty}},\mathbb{F}_{2}) = 2 < 4 = \widetilde{\chi}_{2}(G_{K,S_{\infty}},\mathbb{F}_{2}). \]
  
 Furthermore, since $Q_{8}$ is a $2$-group, it follows from \cite[Corollary 5.5]{MR1848964} that
 \[r(G_{K,S_{\infty}}) =r(Q_{8})=\dim_{\mathbb{F}_{2}}H^{2}(Q_{8},\mathbb{F}_{2})=2, \]
   which yields
   \[ r(G_{K,S_{\infty}})-d(G_{K,S_{\infty}})=2-2<1-0=\mathcal{R}(K)-\gamma.  \]
 \end{example}

\section{Proofs of Theorems \ref{counterexampletodimensionalconj} and \ref{corofUFM}}\label{proofofothertheorems}

This section is devoted to the proofs of Theorems \ref{counterexampletodimensionalconj} and \ref{corofUFM}. We begin with a simple lemma.

\begin{Lem}\label{linearalgebra}
	Let $G$ be a finite group such that either $G = \{1\}$ or $G \cong \mathbb{Z}/2\mathbb{Z}$, and let $\rho: G \to \mathrm{GL}_{n}(F)$ be a representation over a field $F$. If $\mathrm{ad} = \mathrm{ad}(\rho)$ is the adjoint representation of $\rho$, then we have
	\[ \dim_{F} H^0(G, \mathrm{ad}) \geq n. \]
\end{Lem}

\begin{proof}
	If $G = \{1\}$, then $H^{0}(G, \mathrm{ad})$ is the space $M_{n}(F)$ of $n \times n$ matrices over $F$, and hence $\dim_{F} H^{0}(G, \mathrm{ad}) = n^{2} \geq n$. Thus, we may assume that $G \cong \mathbb{Z}/2\mathbb{Z}$. Let $c$ be the generator of $G$, and let $C = \rho(c) \in \mathrm{GL}_{n}(F)$. Then $H^0(G, \mathrm{ad})$ is precisely the centralizer $Z_{M_n(F)}(C)$ of $C$ in $M_{n}(F)$. It is a standard fact in linear algebra that the dimension of the centralizer of any $n \times n$ matrix over an arbitrary field is at least $n$. Thus, we have
	\[ \dim_{F} H^{0}(G, \mathrm{ad}) = \dim_{F} Z_{M_n(F)}(C) \geq n. \]
\end{proof}

We are now ready to prove Theorems \ref{counterexampletodimensionalconj} and \ref{corofUFM} as follows.

\begin{proof}[Proof of Theorem \ref{counterexampletodimensionalconj}]
	By Theorem \ref{main}, there exists a finite set $T \supset S$ such that $T \cap S_p = \emptyset$, $T \supset S_{\infty}$ and
	\[ \dim_{\mathbb{F}} \chi_{2}(G_{K,T},\mathrm{ad}):=\log_{q} \chi_{2}(G_{K,T},\mathrm{ad})=\sum_{v\in S_{\infty}}\dim_{\mathbb{F}}H^{0}(G_{v},\mathrm{ad}),\]
	where $q$ is the cardinality of $\mathbb{F}$. Since $\overline{\rho}$ is absolutely irreducible as a representation of $G_{K,S}$, it remains absolutely irreducible as a representation of $G_{K,T}$. Therefore, we have $h^{0}(G_{K,T}, \mathrm{ad}) = h^{0}(G_{K,S}, \mathrm{ad}) = 1$. Furthermore, since $n \geq 2$ and $S_{\infty}$ is nonempty, Lemma \ref{linearalgebra} implies that $\sum_{v \in S_{\infty}} \dim_{\mathbb{F}} H^{0}(G_{v}, \mathrm{ad}) \geq n \geq 2$. Thus, we obtain
	\begin{align*}
		h^{1}(G_{K,T}, \mathrm{ad}) - h^{2}(G_{K,T}, \mathrm{ad}) &= -\sum_{v \in S_{\infty}} \dim_{\mathbb{F}} H^{0}(G_{v}, \mathrm{ad}) + h^{0}(G_{K,T}, \mathrm{ad}) \\
		&= -\sum_{v \in S_{\infty}} \dim_{\mathbb{F}} H^{0}(G_{v}, \mathrm{ad}) + 1 \\
		&\leq -2 + 1 \\
		&< 0.
	\end{align*}
	This completes the proof.
\end{proof}

\begin{proof}[Proof of Theorem \ref{corofUFM}]
	Recall from \cite[Theorem 4.2]{MR1860043} that the universal deformation ring $R_{\overline{\rho}}$ of $\overline{\rho}$ has a presentation
	\[ W(\mathbb{F})[[X_{1}, \dots, X_{h^{1}}]] / (f_{1}, \dots, f_{h^{2}}), \]
	where $h^{i} = h^{i}(G_{K,S}, \mathrm{ad})$ for $i=1, 2$. Under our assumptions, \cite[Theorem 1]{MR3294389} ensures that $R_{\overline{\rho}}$ is finite over $W(\mathbb{F})$. 
	
	Suppose toward a contradiction that $h^{1} - h^{2} \geq 0$. Then, an application of \cite[Lemma 2, Appendix]{MR2004460} implies that $R_{\overline{\rho}}$ is finite flat over $W(\mathbb{F})$. Consequently, the ring $R_{\overline{\rho}}$ admits a characteristic zero point; that is, there exists a $p$-adic representation $\rho: G_{K,S} \to \mathrm{GL}_{n}(\overline{\mathbb{Q}}_{p})$ whose reduction modulo $p$ is $\overline{\rho}$. However, since the image of $\overline{\rho}$ contains $\mathrm{SL}_{n}(\mathbb{F})$, the image of $\rho$ is necessarily infinite by \cite[Main theorem]{MR3336600}. This yields a contradiction to the unramified Fontaine--Mazur conjecture, thereby completing the proof.
\end{proof}

  \section*{Acknowledgements}
 The author is grateful to Adrien Morin for discussions concerning \cite[Proposition 6.23]{MR4502240}. The author would also like to thank the anonymous referee for their valuable comments and suggestions, which greatly improved the quality of this paper.

\bibliographystyle{plain}
\bibliography{euler-char}

\end{document}